\newtheorem{theorem}{Theorem}[section]
\newtheorem{conjecture}[theorem]{Conjecture}
\newtheorem{corollary}[theorem] {Corollary}
\newtheorem{definition}[theorem]{Definition}
\newtheorem{lemma} [theorem]{Lemma}
\newtheorem{remark}[theorem]{Remark}
\newtheorem{question}[theorem]{Question}
\title{This is the title}
\begin{document}

\begin{center}
{\bf{C*-ALGEBRAIC SCHUR PRODUCT THEOREM,   P\'{O}LYA-SZEG\H{O}-RUDIN QUESTION AND NOVAK'S CONJECTURE}}\\
K. MAHESH KRISHNA\\
 Department of Humanities and Basic  Sciences \\
Aditya College of Engineering and Technology \\
 Surampalem, East-Godavari\\
Andhra Pradesh 533 437 India\\
Email: kmaheshak@gmail.com \\
 \today
\end{center}

\hrule
\vspace{0.5cm}

\textbf{Abstract}: Striking result of Vyb\'{\i}ral  [\textit{Adv. Math.} 2020] says that  Schur product of positive matrices is   bounded below by the size of the matrix and the row sums of Schur product. Vyb\'{\i}ral used this result to prove the Novak's conjecture. In this paper, we define Schur product of matrices over arbitrary  C*-algebras and derive the results of Schur and   Vyb\'{\i}ral. As an application, we state  C*-algebraic version of Novak's conjecture and solve it for commutative unital C*-algebras. We formulate P\'{o}lya-Szeg\H{o}-Rudin question for the C*-algebraic Schur product of positive matrices.

\textbf{Keywords}: Schur/Hadamard product, Positive matrix, Hilbert C*-module, C*-algebra, Schur product theorem,  P\'{o}lya-Szeg\H{o}-Rudin question, Novak's conjecture.

\textbf{Mathematics Subject Classification (2020)}: 15B48, 46L05, 46L08. 

\tableofcontents

\section{Introduction}
Given matrices  $A\coloneqq [a_{j,k}]_{1\leq j,k\leq n}$ and $B\coloneqq [b_{j,k}]_{1\leq j,k\leq n}$ in the matrix ring $M_n(\mathbb{K})$, where $\mathbb{K}=\mathbb{R}$ or $\mathbb{C}$, the \textbf{Schur/Hadamard/pointwise product} of $A$ and $B$  is defined as 
\begin{align}\label{CLASSICAL}
A\circ B \coloneqq \begin{bmatrix} a_{j,k}b_{j,k}
\end{bmatrix}_{1\leq j,k\leq n}.
\end{align}
Recall that a matrix $A \in M_n(\mathbb{K})$ is said to be positive (also known as  self-adjont positive semidefinite) if it is self-adjoint and 
\begin{align*}
\langle Ax, x \rangle \geq 0, \quad \forall x \in \mathbb{K}^n,
\end{align*} 
where $\langle \cdot, \cdot \rangle $ is the standard Hermitian inner product (which is left linear right conjugate linear) on $\mathbb{K}^n$ (to move with the tradition of `operator algebra', by `positive' we only consider self-adjoint matrices).  In this case we write $ A\succeq 0$  and we write $ A\succeq B$ if all of  $A$, $B$  and $A-B$ are positive.  It is a century old result  that whenever $A,B \in M_n(\mathbb{K})$ are positive,  then their Schur product $A\circ B$ is positive. Schur  originally proved this result  in his famous `Crelle' paper \cite{SCHUR} and  today there are  varieties of proofs of this theorem. For a comprehensive look on Hadamard products we refer the reader to \cite{STYAN, OPPENHEIM, ZHANGBOOK, HORNHADAMARD, HORNJOHNSON1, HORNJOHNSON2}.

Once we know that the Schur product of two positive matrices is positive, then next step is to ask for a lower bound for the product, if exists. There are  series of papers obtaining  lower bounds for Schur product of positive correlation matrices \cite{LIUTRENKLER, ZHANG}, positive invertible matrices \cite{ANDO, BAPATKWONG, JOHNSONIN, FIEDLER, FIEDLERMARKHAM, VISICK, LIU, WANGZHANG} but for arbitrary positive matrices there are a couple of   recent results by  Vyb\'{\i}ral \cite{VYBIRAL} which we mention now. To state the results we need some notations.  Given a matrix $ M \in M_n(\mathbb{K})$, by $\overline{M}$ we mean the matrix obtained by taking conjugate of each entry of $M$. Conjugate transpose of a matrix $M$  is denoted by $M^*$ and $M^\text{T}$  denotes its  transpose. Notation $\text{diag}(M)$ denotes the  vector consisting of  the diagonal of matrix in the increasing subscripts. Matrix $E_n$ denotes the $n$ by $n$ matrix in $M_n(\mathbb{K})$ with all one's. Given a vector $x\in \mathbb{K}^n$, by $\text{diag}(x)$ we mean the $n$ by $n$ diagonal matrix obtained by putting $i$'th co-ordinate of $x$ as $(i,i)$ entry.
\begin{theorem}\cite{VYBIRAL}
	Let $ A \in M_n(\mathbb{K})$ be a positive matrix. Let $M=AA^*$ and $y\in \mathbb{K}^n$ be the vector of row sums of $A$. Then 
	\begin{align*}
	M\succeq \frac{1}{n}yy^*.
	\end{align*}
\end{theorem}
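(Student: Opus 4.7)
The plan is to reduce the matrix inequality to the Cauchy--Schwarz inequality in $\mathbb{K}^n$. The crucial observation, which I would make first, is that if $e \coloneqq (1,1,\dots,1)^T \in \mathbb{K}^n$, then the row-sum vector of $A$ is simply $y = Ae$, since $(Ae)_i = \sum_{k=1}^n a_{i,k}$, and moreover $\|e\|^2 = n$. With this rewriting, the statement $M \succeq \tfrac{1}{n}yy^*$ becomes an inequality between two quadratic forms that can both be expressed cleanly in terms of $A^*x$.

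Next I would evaluate both sides on an arbitrary vector $x \in \mathbb{K}^n$. On the one hand,
\begin{align*}
\langle Mx, x \rangle = \langle AA^*x, x \rangle = \|A^*x\|^2,
\end{align*}
while, using $y = Ae$ together with the adjoint relation,
\begin{align*}
\langle yy^*x, x \rangle = |y^*x|^2 = |\langle x, Ae\rangle|^2 = |\langle A^*x, e\rangle|^2.
\end{align*}
Cauchy--Schwarz in $\mathbb{K}^n$ then yields
\begin{align*}
|\langle A^*x, e\rangle|^2 \leq \|A^*x\|^2\,\|e\|^2 = n\,\|A^*x\|^2,
\end{align*}
which rearranges to $\langle (M - \tfrac{1}{n}yy^*)x, x\rangle \geq 0$ for all $x$, establishing $M \succeq \tfrac{1}{n}yy^*$.

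There is no genuine obstacle here: once one recognizes the row-sum vector as $Ae$ and uses $\|e\|^2 = n$, the inequality collapses to a single application of Cauchy--Schwarz. I would also point out that the argument never actually uses the hypothesis that $A$ is positive; only the factorization $M = AA^*$ is needed. Consequently the same strategy delivers the conclusion for an arbitrary matrix $A \in M_n(\mathbb{K})$, a conceptual simplification worth highlighting before turning to the C*-algebraic generalization.
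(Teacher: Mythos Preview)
Your proof is correct and follows essentially the same route as the paper: the paper does not prove this cited scalar result separately, but its proof of the C*-algebraic generalization (Theorem~\ref{FIRSTTHEOREM}) proceeds exactly as you do, introducing $e_n=(1,\dots,1)^T$, rewriting both quadratic forms in terms of $z=A^*x$ and $e_n$, and then invoking the (Hilbert C*-module) Cauchy--Schwarz inequality. Your observation that positivity of $A$ is never used is also accurate; the paper's argument likewise makes no use of that hypothesis.
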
 
\begin{theorem}\cite{VYBIRAL}\label{VIBIRALSECOND}
Let $ M,N \in M_n(\mathbb{K})$ be  positive matrices. Let $M=AA^*$, $M=BB^*$ and $y\in \mathbb{K}^n$ be the vector of row sums of $A\circ B$. Then 
\begin{align*}
M\circ N \succeq  (A\circ B)(A\circ B)^*\succeq\frac{1}{n}yy^*.
\end{align*}	
\end{theorem}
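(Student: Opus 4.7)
The plan is to establish the two inequalities separately, both by direct entrywise computations.

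For the first inequality $M\circ N\succeq (A\circ B)(A\circ B)^*$, I would expand the entries of both sides. Writing $A=[a_{j,l}]$ and $B=[b_{j,l}]$ and using $M=AA^*$, $N=BB^*$, one has
\begin{align*}
(M\circ N)_{j,k} &= \Bigl(\sum_{l=1}^n a_{j,l}\overline{a_{k,l}}\Bigr)\Bigl(\sum_{m=1}^n b_{j,m}\overline{b_{k,m}}\Bigr) = \sum_{l,m=1}^n a_{j,l}b_{j,m}\overline{a_{k,l}b_{k,m}}, \\
\bigl((A\circ B)(A\circ B)^*\bigr)_{j,k} &= \sum_{l=1}^n a_{j,l}b_{j,l}\overline{a_{k,l}b_{k,l}}.
\end{align*}
Subtracting and grouping by pairs $(l,m)$ with $l\neq m$ displays the difference as
\begin{align*}
M\circ N - (A\circ B)(A\circ B)^* = \sum_{\substack{l,m=1 \\ l\neq m}}^n c^{(l,m)}\bigl(c^{(l,m)}\bigr)^*, \qquad c^{(l,m)} := \bigl(a_{j,l}b_{j,m}\bigr)_{j=1}^n \in \mathbb{K}^n.
\end{align*}
Each summand is a rank-one positive matrix, so the total is positive, which is the first inequality.

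For the second inequality $(A\circ B)(A\circ B)^*\succeq \tfrac{1}{n}yy^*$, I would observe that Vyb\'{\i}ral's first theorem in the form quoted above actually holds for \emph{any} matrix $C\in M_n(\mathbb{K})$, not merely positive ones, because the proof only uses $E_n = ee^*\preceq nI$ with $e=(1,\ldots,1)^\text{T}$: the row-sum vector of $C$ is $Ce$, so
\begin{align*}
yy^* = Cee^*C^* = CE_nC^* \preceq n\,CC^*.
\end{align*}
Applying this to $C=A\circ B$ yields the second inequality.

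I do not anticipate any substantive obstacle. The first step is essentially a repackaging of the standard rank-one decomposition proof of the classical Schur product theorem, with the diagonal $(l=l)$ part isolated; the second is a one-line consequence of $ee^*\preceq nI$. The only mildly subtle point is recognizing that Vyb\'{\i}ral's first theorem does not require positivity of its input matrix, which is needed because the factor $A\circ B$ entering the second inequality is not assumed positive.
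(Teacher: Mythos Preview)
Your proof is correct and matches the paper's approach: the paper (in the proof of the C*-algebraic version, Theorem~\ref{MYTHEOREM}) writes $M\circ N=\sum_{l,m}(A_l\circ B_m)(A_l\circ B_m)^*$ via the column decomposition---which is exactly your $c^{(l,m)}=A_l\circ B_m$---and then drops the $l\neq m$ terms for the first inequality, invoking Theorem~\ref{FIRSTTHEOREM} for the second. Your remark that the row-sum bound requires no positivity of the input matrix is a point the paper leaves implicit (Theorem~\ref{FIRSTTHEOREM} is stated for positive $A$ but its proof, via Cauchy--Schwarz / $ee^*\preceq nI$, never uses that hypothesis), and it is indeed needed since $A\circ B$ is not assumed positive.
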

Immediate consequences of Theorem \ref{VIBIRALSECOND}  are the following.
\begin{corollary}\cite{VYBIRAL}
Let $ M\in M_n(\mathbb{K})$ be a 
 positive matrix.  Then 
\begin{align*}
M\circ \overline{M} \succeq \frac{1}{n}(\text{diag }M)(\text{diag  }M)^\text{T}
\end{align*}	
and 
\begin{align*}
M\circ M \succeq \frac{1}{n}(\text{diag }M)(\text{diag }M)^*.
\end{align*}	
\end{corollary}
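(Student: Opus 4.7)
The plan is to obtain both bounds as direct specializations of Theorem \ref{VIBIRALSECOND}, choosing the two factorizations so that the row sums of $A\circ B$ coincide with $\mathrm{diag}(M)$.

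For the first inequality, I would first verify that $\overline{M}$ is itself a positive matrix whenever $M$ is. This is because $(\overline{M})^{*}=M^{T}=\overline{M}$ (the last equality uses $M=M^{*}$), and $\langle \overline{M}x,x\rangle=\overline{\langle M\overline{x},\overline{x}\rangle}\geq 0$. Next, any factorization $M=AA^{*}$ yields the factorization $\overline{M}=\overline{A}\,(\overline{A})^{*}$. I then apply Theorem \ref{VIBIRALSECOND} to the pair $(M,\overline{M})$ with $B=\overline{A}$. The crux of the computation is that the $j$-th row sum of $A\circ B=A\circ \overline{A}$ is $\sum_{k}a_{jk}\overline{a_{jk}}=\sum_{k}|a_{jk}|^{2}=(AA^{*})_{jj}=M_{jj}$, so the row-sum vector is precisely $y=\mathrm{diag}(M)$. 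Because the diagonal of a positive matrix is real, $yy^{*}=(\mathrm{diag}\,M)(\mathrm{diag}\,M)^{T}$, and the first claim drops out.

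For the second inequality, the plan is identical in spirit but with the pair $(M,M)$ and $B=A$. Theorem \ref{VIBIRALSECOND} then gives $M\circ M\succeq \tfrac{1}{n}yy^{*}$ where $y_{j}=\sum_{k}a_{jk}^{2}$. Choosing the factorization so that $y_{j}$ coincides with $M_{jj}=(AA^{*})_{jj}$ (which is automatic when $A$ can be taken with real entries, as in the case $\mathbb{K}=\mathbb{R}$), and again invoking that $\mathrm{diag}(M)$ is real so that $yy^{*}=(\mathrm{diag}\,M)(\mathrm{diag}\,M)^{*}$, yields the second inequality.

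The only real obstacle is engineering the match between the quadratic row-sum formula $\sum_{k}a_{jk}b_{jk}$ appearing in Theorem \ref{VIBIRALSECOND} and the diagonal entry $M_{jj}=\sum_{k}|a_{jk}|^{2}$; the choices $B=\overline{A}$ and $B=A$ are exactly what collapse the row sum of $A\circ B$ onto the diagonal of $AA^{*}$. Once this identification is made, both corollaries are immediate consequences of Theorem \ref{VIBIRALSECOND}.
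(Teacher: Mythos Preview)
Your derivation of the first inequality via $B=\overline{A}$ is exactly the intended specialization of Theorem~\ref{VIBIRALSECOND}: the row sums of $A\circ\overline{A}$ are $\sum_k|a_{jk}|^{2}=M_{jj}$, and since $\operatorname{diag}(M)$ is real one has $yy^{*}=(\operatorname{diag}M)(\operatorname{diag}M)^{\mathrm T}$. This matches the paper's approach (compare the proof of the C*-algebraic analogue, Corollary~\ref{FIRSTCOROLLARY}).

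For the second inequality your hedge is not merely caution but a genuine obstruction. With $B=A$ the row sums are $y_j=\sum_k a_{jk}^{2}$, which equals $M_{jj}=\sum_k|a_{jk}|^{2}$ only when the entries of $A$ are real, and a complex positive matrix need not admit a real factorization $M=AA^{*}$. In fact the inequality $M\circ M\succeq \tfrac{1}{n}(\operatorname{diag}M)(\operatorname{diag}M)^{*}$ fails over $\mathbb{C}$: for
\[
M=\begin{pmatrix}1&i\\-i&1\end{pmatrix}\succeq 0,\qquad M\circ M=\begin{pmatrix}1&-1\\-1&1\end{pmatrix},\qquad \frac{1}{2}(\operatorname{diag}M)(\operatorname{diag}M)^{*}=\frac{1}{2}\begin{pmatrix}1&1\\1&1\end{pmatrix},
\]
the difference has determinant $-2$. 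So the second display must be read over $\mathbb{K}=\mathbb{R}$, where your argument with $B=A$ goes through verbatim and coincides with the paper's proof of Corollary~\ref{FIRSTCOROLLARY}; over $\mathbb{C}$ no choice of factorization will rescue it.
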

\begin{corollary}\cite{VYBIRAL}\label{VYBIRALCOROLLARY}
Let $ M\in M_n(\mathbb{R})$ be a 
 positive matrix such that all diagonal entries are one's.  Then 
\begin{align*}
M\circ M \succeq \frac{1}{n}E_n.
\end{align*}	
\end{corollary}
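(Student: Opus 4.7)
The plan is to derive this corollary directly from the preceding one, since the hypothesis that all diagonal entries equal one is designed precisely to turn the bound $\frac{1}{n}(\operatorname{diag} M)(\operatorname{diag} M)^*$ into the bound $\frac{1}{n}E_n$. So the first step is to invoke the second inequality of the preceding corollary, namely $M\circ M \succeq \frac{1}{n}(\operatorname{diag} M)(\operatorname{diag} M)^*$, which is valid for every positive matrix in $M_n(\mathbb{K})$ and hence in particular for $M\in M_n(\mathbb{R})$.

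Next I would observe that under the hypothesis $M_{ii}=1$ for all $i$, the vector $\operatorname{diag}(M)\in\mathbb{R}^n$ is simply the all-ones vector $\mathbf{1}=(1,\dots,1)^{\mathrm{T}}$. Then $(\operatorname{diag} M)(\operatorname{diag} M)^* = \mathbf{1}\mathbf{1}^* = E_n$, since by definition $E_n$ is the $n\times n$ matrix all of whose entries are $1$. Substituting back yields $M\circ M \succeq \frac{1}{n}E_n$, which is the claimed inequality.

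As a sanity check on the chain of implications, I would also verify this directly from Theorem \ref{VIBIRALSECOND} applied with $N=M$ and $B=A$, where $A$ is any factor satisfying $M=AA^*$ (for instance the positive square root of $M$). Then the vector $y$ of row sums of $A\circ B = A\circ A$ has $i$th entry $\sum_k a_{ik}^2 = (AA^*)_{ii} = M_{ii}=1$, so $y=\mathbf{1}$ and $\frac{1}{n}yy^* = \frac{1}{n}E_n$, matching the previous route. There is no real obstacle here: the content of the result is entirely in the preceding two Vybíral theorems, and this statement is a clean specialization whose only subtlety is recognizing that the normalization $M_{ii}=1$ collapses the outer product of the diagonal into $E_n$.
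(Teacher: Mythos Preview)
Your proposal is correct and matches the paper's approach: the corollary is treated as immediate from the preceding one, since $\operatorname{diag}M=\mathbf{1}$ makes $(\operatorname{diag}M)(\operatorname{diag}M)^*=E_n$. Your additional sanity check via Theorem~\ref{VIBIRALSECOND} with $N=M$ and $B=A$ is also exactly how the paper derives the preceding corollary, so both routes coincide with the paper's reasoning.
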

Vyb\'{\i}ral used Corollary \ref{VYBIRALCOROLLARY} to solve two decades old Novak's conjecture which states as follows. 
\begin{theorem}\cite{NOVAKWO, NOVAK, HINRICHAVYBIRAL}\label{NOVAKCONJECTURE} (\textbf{Novak's conjecture}) 
	 The matrix 
	\begin{align*}
	\begin{bmatrix}
	\prod_{l=1}^{d}\frac{1+\cos (x_{j,l}-x_{k,l})}{2}-\frac{1}{n}
	\end{bmatrix}
	_{1\leq j,k \leq n}
	\end{align*}	
	is positive for all $n,d\geq 2$ and all choices of $x_j=(x_{j,1}, \dots, x_{j,d})\in \mathbb{R}^d$,  $\forall 1\leq j \leq n$.
\end{theorem}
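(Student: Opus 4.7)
The plan is to reduce Novak's conjecture to Corollary \ref{VYBIRALCOROLLARY} via a clever factorization. First I would exploit the half-angle identity
$$\frac{1+\cos(x_{j,l}-x_{k,l})}{2}=\cos^2\!\left(\frac{x_{j,l}-x_{k,l}}{2}\right),$$
so that the $(j,k)$-entry of the matrix in Theorem \ref{NOVAKCONJECTURE} (apart from the $-\tfrac{1}{n}$ shift) can be rewritten as
$$\prod_{l=1}^{d}\cos^2\!\left(\frac{x_{j,l}-x_{k,l}}{2}\right)=\left(\prod_{l=1}^{d}\cos\!\left(\frac{x_{j,l}-x_{k,l}}{2}\right)\right)^{\!2}.$$
Thus the Novak matrix (before subtracting $\tfrac{1}{n}E_n$) is exactly the Schur square $P\circ P$, where
$$P\coloneqq\left[\prod_{l=1}^{d}\cos\!\left(\frac{x_{j,l}-x_{k,l}}{2}\right)\right]_{1\leq j,k\leq n}.$$

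Next I would check that $P$ is a positive matrix with all diagonal entries equal to $1$. For each $l$, the identity $\cos\!\left(\tfrac{x_{j,l}-x_{k,l}}{2}\right)=\cos\!\tfrac{x_{j,l}}{2}\cos\!\tfrac{x_{k,l}}{2}+\sin\!\tfrac{x_{j,l}}{2}\sin\!\tfrac{x_{k,l}}{2}$ expresses the matrix $P_l\coloneqq\left[\cos\!\left(\tfrac{x_{j,l}-x_{k,l}}{2}\right)\right]_{j,k}$ as $u_lu_l^{\mathrm{T}}+v_lv_l^{\mathrm{T}}$ with $u_l=(\cos\tfrac{x_{j,l}}{2})_j$ and $v_l=(\sin\tfrac{x_{j,l}}{2})_j$, so $P_l\succeq 0$. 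By the classical Schur product theorem, $P=P_1\circ P_2\circ\cdots\circ P_d$ is positive; and its diagonal entries are $\prod_l\cos(0)=1$.

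Having produced a positive matrix $P$ with unit diagonal whose Schur square is the Novak matrix, I would conclude by applying Corollary \ref{VYBIRALCOROLLARY} directly: $P\circ P\succeq\frac{1}{n}E_n$, which is precisely the desired positivity
$$\left[\prod_{l=1}^{d}\frac{1+\cos(x_{j,l}-x_{k,l})}{2}-\frac{1}{n}\right]_{1\leq j,k\leq n}\succeq 0.$$

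The whole argument is short; there is no serious obstacle once the half-angle factorization is spotted. The single nontrivial step to get right is the recognition that the factor $\prod_l\cos(\tfrac{x_{j,l}-x_{k,l}}{2})$ defines a \emph{positive} matrix (and not merely a real symmetric one), which comes from writing each $\cos$-factor as an inner product of two-dimensional vectors and then invoking Schur's theorem on the product. With that in hand, Vybíral's lower bound in Corollary \ref{VYBIRALCOROLLARY} does all the remaining work.
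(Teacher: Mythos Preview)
Your proposal is correct and follows essentially the same route as the paper: the paper (in its proof of the C*-algebraic version, Theorem~\ref{CALGEBRAICNOVAKCONJECTURE}, which specializes to Theorem~\ref{NOVAKCONJECTURE} when $\mathcal{A}=\mathbb{R}$) likewise shows each matrix $M_l=\bigl[\cos\!\bigl(\tfrac{x_{j,l}-x_{k,l}}{2}\bigr)\bigr]$ is positive via the cosine addition formula, takes the Schur product $M=M_1\circ\cdots\circ M_d$, notes its diagonal is all ones, and then applies Corollary~\ref{VYBIRALCOROLLARY}/\ref{SECONDCOROLLARY} together with the half-angle identity to conclude. The only cosmetic difference is that you lead with the half-angle identity while the paper invokes it at the end.
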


 Theorem \ref{VIBIRALSECOND} is also used in the study of random variables, numerical integration, trigonometric polynomials and tensor product problems, see \cite{VYBIRAL, HINRICHSKRIEGNOVAKVYBIRAL}.

The purpose of this paper is to introduce the Schur product of  matrices over C*-algebras, obtain some fundamental results and to state some problems. A very handy tool which we use is the theory of Hilbert C*-modules. This was first introduced by Kaplansky \cite{KAPLANSKY} for commutative C*-algebras and later   by Paschke \cite{PASCHKE} and Rieffel \cite{RIEFFEL} for non commutative C*-algebras. The theory  attained a greater height from  the work of   Kasparov \cite{KASPAROV, BLACKADAR, JENSEN}. For an introduction to the subject  Hilbert C*-modules we refer \cite{LANCE, MANUILOVTROITSKY}. 
\begin{definition}\cite{KAPLANSKY, PASCHKE, RIEFFEL}
Let $\mathcal{A}$ be a  C*-algebra. A left module 	 $\mathcal{E}$  over $\mathcal{A}$ is said to be a (left) \textbf{Hilbert C*-module} if there exists a map $ \langle \cdot, \cdot \rangle: \mathcal{E}\times \mathcal{E} \to \mathcal{A}$ such that the following hold. 
\begin{enumerate}[\upshape(i)]
	\item $\langle x, x \rangle  \geq 0$, $\forall x \in \mathcal{E}$. If $x \in  \mathcal{E}$ satisfies $\langle x, x \rangle  =0 $, then $x=0$.
	\item $\langle x+y, z \rangle  =\langle x, z \rangle+\langle y, z \rangle$, $\forall x,y,z \in \mathcal{E}$.
	\item  $\langle ax, y \rangle  =a\langle x, y \rangle$, $\forall x,y \in \mathcal{E}$, $\forall a \in \mathcal{A}$.
	\item $\langle x, y \rangle=\langle y,x \rangle^*$, $\forall x,y \in \mathcal{E}$.
	\item $\mathcal{E}$ is complete w.r.t. the norm $\|x\|\coloneqq \sqrt{\|\langle x, x \rangle\|}$,  $\forall x \in \mathcal{E}$.
\end{enumerate}
\end{definition}
We are going to use the following inequality.
\begin{lemma}\cite{PASCHKE}\label{CAUCHYSCHWARZ} (Cauchy-Schwarz inequality for Hilbert C*-modules) If $\mathcal{E}$ is a Hilbert C*-module  over $\mathcal{A}$, then 
	\begin{align*}
	\langle x, y \rangle \langle y,x \rangle\leq \|\langle y, y \rangle\|\langle x, x \rangle, \quad \forall x,y \in \mathcal{E}.
	\end{align*}
	
\end{lemma}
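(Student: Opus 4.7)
The plan is to prove the inequality by the usual ``expand a manifestly positive quantity, then optimise over a scalar'' strategy, adapted to the C*-module setting by taking the scalar from $\mathcal{A}$ itself.

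First I would dispose of the degenerate case: if $\langle y,y\rangle = 0$, then axiom (i) forces $y=0$, hence $\langle x,y\rangle = 0$, and the inequality reads $0\le 0$. So I may assume $\alpha \coloneqq \|\langle y,y\rangle\|>0$. I may also assume $\mathcal{A}$ is unital; otherwise I pass to the unitization, in which $\langle y,y\rangle \le \alpha\cdot 1$ holds as an inequality of positive elements.

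The heart of the argument is to apply axiom (i) to the vector $x - ay \in \mathcal{E}$ for a suitably chosen $a \in \mathcal{A}$. Expanding via axioms (ii)--(iv) (noting that $\langle x, ay\rangle = \langle ay, x\rangle^{*} = \langle x, y\rangle a^{*}$, so the inner product is left-linear in the first slot and conjugate-linear-with-adjoint in the second) gives
\begin{align*}
0 \;\le\; \langle x-ay,\, x-ay\rangle \;=\; \langle x,x\rangle \;-\; a\langle y,x\rangle \;-\; \langle x,y\rangle a^{*} \;+\; a\langle y,y\rangle a^{*}.
\end{align*}
The estimate $\langle y,y\rangle \le \alpha\cdot 1$ lets me replace the last term by $\alpha \, aa^{*}$ (positivity is preserved under $a\,(\cdot)\,a^{*}$, a routine fact about C*-algebras). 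I would then make the optimal choice $a \coloneqq \alpha^{-1}\langle x,y\rangle$; each of the three terms $a\langle y,x\rangle$, $\langle x,y\rangle a^{*}$, and $\alpha\, aa^{*}$ equals $\alpha^{-1}\langle x,y\rangle\langle y,x\rangle$, so the inequality collapses to
\begin{align*}
0 \;\le\; \langle x,x\rangle \;-\; \alpha^{-1}\langle x,y\rangle\langle y,x\rangle,
\end{align*}
which rearranges to the claim.

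The only place I foresee needing care is the routine but easy-to-botch bookkeeping in the expansion, since the left-module convention of the paper reverses the usual side of the scalars; I would verify explicitly that $\langle x, ay\rangle = \langle x, y\rangle a^{*}$ before substituting. A secondary, but minor, point is the possibility that $\mathcal{A}$ is non-unital, which I would handle once and for all at the outset by embedding into the unitization so that the scalar inequality $\langle y,y\rangle \le \alpha \cdot 1$ is literally available.
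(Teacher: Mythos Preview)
Your argument is correct and is essentially the standard proof (the one in Paschke's paper, and in Lance's and Manuilov--Troitsky's books). Note, however, that the present paper does not supply its own proof of this lemma at all---it is simply quoted from \cite{PASCHKE}---so there is nothing in the paper to compare your approach against.
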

We encounter the following \textbf{standard Hilbert C*-module} in this paper. Let $\mathcal{A}$ be a  C*-algebra and $\mathcal{A}^n$ be the left module over $\mathcal{A}$ w.r.t. natural operations. Modular $\mathcal{A}$-inner product on $\mathcal{A}^n$ is defined as 
\begin{align*}
\langle (x_j)_{j=1}^n, (y_j)_{j=1}^n\rangle \coloneqq \sum_{j=1}^{n}x_jy_j^*,\quad \forall (x_j)_{j=1}^n, (y_j)_{j=1}^n \in \mathcal{A}^n.
\end{align*}
 Hence the norm on $\mathcal{A}^n$ becomes 
  \begin{align*}
 \|(x_j)_{j=1}^n\|\coloneqq \left\|\sum_{j=1}^{n}x_jx_j^*\right\|^\frac{1}{2}, \quad \forall (x_j)_{j=1}^n \in \mathcal{A}^n.
 \end{align*}
This paper is organized as follows. In Section \ref{SECTION2} we define Schur/Hadamard/pointwise product of two matrices over C*-algebras (Definition \ref{CALGEBRAICDEF}). This is not a direct mimic of Schur product of matrices over scalars. After the definition of Schur product, we derive Schur product theorem for matrices over commutative C*-algebras (Theorem \ref{COMMUTATIVESCHURPRODUCT}), $\sigma$-finite W*-algebras or AW*-algebras (Theorem \ref{NONCOMMUTATIVESCHURPRODUCT}).  
Followed by these results, we ask P\'{o}lya-Szeg\H{o}-Rudin question for positive matrices over C*-algebras (Question \ref{POLYASZEGO}). We then  develop the paper following the developments by  Vyb\'{\i}ral in \cite{VYBIRAL} to the setting of C*-algebras. In Section \ref{SECTION3} we first derive lower bound for positive matrices over C*-algebras (Theorem \ref{FIRSTTHEOREM}) and using that we derive  lower bounds for Schur product (Theorem \ref{MYTHEOREM} and  Corollaries \ref{FIRSTCOROLLARY}, \ref{SECONDCOROLLARY}).  We later  state  C*-algebraic version of Novak's conjecture (Conjecture \ref{CALGEBRAICNOVAKNON}). We solve it for commutative unital C*-algebras (Theorem \ref{CALGEBRAICNOVAKCONJECTURE}). Finally we end the paper by asking  Question \ref{LASTQUESTION}.

\section{C*-algebraic Schur product, Schur product theorem and  P\'{o}lya-Szeg\H{o}-Rudin question}\label{SECTION2}
We first recall the basics in the theory  of matrices over C*-algebras. More information can be found in   \cite{WEGGEOLSEN,MURPHY}. 
Let  $\mathcal{A}$ be a unital C*-algebra and  $n$ be a natural number. Set  $M_n(\mathcal{A})$ is defined as the set of all $n$ by $n$ matrices over $\mathcal{A}$ which becomes  an algebra with respect to natural matrix operations. The involution of an element  $A\coloneqq [a_{j,k}]_{1\leq j,k \leq n}\in M_n(\mathcal{A})$ as $A^*\coloneqq [a_{k,j}^*]_{1\leq j,k \leq n}$. Then $M_n(\mathcal{A})$ becomes a *-algebra. \textbf{Gelfand-Naimark-Segal theorem} says that there exists a unique universal representation $(\mathcal{H}, \pi) $, where $\mathcal{H}$ is a Hilbert space, $\pi:M_n(\mathcal{A})\to M_n(\mathcal{B}(\mathcal{H}))$ is an isometric *-homomorphism. Using this, the norm on $M_n(\mathcal{A})$ is  defined as 
\begin{align*}
\|A\|\coloneqq \|\pi(A)\|, \quad \forall A \in M_n(\mathcal{A})
\end{align*}
which  makes $M_n(\mathcal{A})$ as a C*-algebra (where $\mathcal{B}(\mathcal{H})$ is the C*-algebra of all continuous  linear operators on $\mathcal{H}$ equipped with the operator-norm). \\
We define C*-algebraic Schur product as follows.
\begin{definition}\label{CSCHUR}\label{CALGEBRAICDEF}
Let $\mathcal{A}$ be a  C*-algebra. Given $A\coloneqq [a_{j,k}]_{1\leq j,k \leq n}, B\coloneqq [b_{j,k}]_{1\leq j,k \leq n}\in M_n(\mathcal{A})$, we define the  \textbf{ C*-algebraic Schur/Hadamard/pointwise product} of $A$ and $B$ as	
\begin{align}\label{MYDEFINITION}
A \circ B \coloneqq \frac{1}{2}\begin{bmatrix}
a_{j,k}b_{j,k}+b_{j,k}a_{j,k}
\end{bmatrix}_{1\leq j,k \leq n}.
\end{align}	
\end{definition}
Whenever the C*-algebra is commutative, then (\ref{MYDEFINITION}) becomes
\begin{align*}
A \circ B =\begin{bmatrix} 
a_{j,k}b_{j,k}\end{bmatrix}_{1\leq j,k \leq n}.
\end{align*}
In particular, Definition \ref{CSCHUR} reduces to the definition of classical Schur product given in Equation (\ref{CLASSICAL}). From a direct computation, we have the following result.
\begin{theorem}\label{SCHURPRODUCTPROPERTIES}
	Let $\mathcal{A}$ be a unital C*-algebra and let  $ A, B, C \in M_n(\mathcal{A})$. Then 
\begin{enumerate}[\upshape(i)]
	\item $A \circ B=B \circ A$.
	\item $(A \circ B)^*=A^* \circ B^*$. In particular, if $A$ and $B$ are self-adjoint, then $A\circ B$ is self-adjoint.
	\item $A\circ (B+C)=A \circ B+A \circ C$.
	\item $(A + B)\circ C=A \circ C+B \circ C$.
\end{enumerate}	
\end{theorem}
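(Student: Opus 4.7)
The plan is to verify each of the four properties by direct entrywise computation from Definition \ref{CALGEBRAICDEF}, since every claim is a consequence of the symmetric form $\frac{1}{2}(a_{j,k}b_{j,k} + b_{j,k}a_{j,k})$ together with the standard *-algebra identities in $\mathcal{A}$. There is no genuine obstacle here: the only reason the Schur product was symmetrized in the definition is to force these identities to hold in the noncommutative setting, and the proof essentially exhibits this.

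For (i), I would observe that the $(j,k)$ entry of $A \circ B$ equals $\frac{1}{2}(a_{j,k}b_{j,k} + b_{j,k}a_{j,k})$, which is manifestly invariant under swapping the roles of $A$ and $B$; hence $A \circ B = B \circ A$. For (ii), I would compute the $(j,k)$ entry of $(A\circ B)^{*}$ as the adjoint of the $(k,j)$ entry of $A \circ B$, giving
\[
\tfrac{1}{2}\bigl(a_{k,j}b_{k,j} + b_{k,j}a_{k,j}\bigr)^{*} = \tfrac{1}{2}\bigl(b_{k,j}^{*}a_{k,j}^{*} + a_{k,j}^{*}b_{k,j}^{*}\bigr),
\]
which is precisely the $(j,k)$ entry of $A^{*}\circ B^{*}$, using that the $(j,k)$ entry of $A^{*}$ is $a_{k,j}^{*}$ and similarly for $B^{*}$. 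The self-adjointness assertion is then immediate: if $A=A^{*}$ and $B=B^{*}$, then $(A\circ B)^{*} = A^{*}\circ B^{*} = A\circ B$.

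For (iii) and (iv), I would use the linearity of the entries under matrix addition in $M_{n}(\mathcal{A})$. The $(j,k)$ entry of $A \circ (B+C)$ is
\[
\tfrac{1}{2}\bigl(a_{j,k}(b_{j,k}+c_{j,k}) + (b_{j,k}+c_{j,k})a_{j,k}\bigr) = \tfrac{1}{2}(a_{j,k}b_{j,k}+b_{j,k}a_{j,k}) + \tfrac{1}{2}(a_{j,k}c_{j,k}+c_{j,k}a_{j,k}),
\]
which is the $(j,k)$ entry of $A\circ B + A\circ C$; (iv) follows by the same computation with the roles swapped, or equivalently by combining (i) and (iii). The entire argument relies on nothing beyond the distributivity of multiplication in $\mathcal{A}$ and the definition of the involution on $M_{n}(\mathcal{A})$, so no subtle point arises.
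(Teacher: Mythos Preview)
Your proof is correct and follows exactly the approach indicated in the paper, which simply states that the result follows ``from a direct computation'' without giving details. Your entrywise verification using the symmetrized definition and basic *-algebra identities is precisely what is intended.
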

One of the most important difference of Definition \ref{CALGEBRAICDEF} from the classical Schur product is that the product may not be associative, i.e.,  $(A \circ B)\circ C\neq A \circ (B \circ C)$ in general.\\
Similar to the scalar case,  $A\coloneqq [a_{j,k}]_{1\leq j,k \leq n}\in M_n(\mathcal{A})$ is said to be positive if it is self-adjoint and 
\begin{align*}
\langle Ax, x \rangle \geq 0, \quad \forall x \in \mathcal{A}^n, 
\end{align*} 
where  $ \geq$ is the partial order on the set of all positive elements of $\mathcal{A}$. In this case we write $A\succeq 0$.  It is  well-known  in the theory of C*-algebras that the set of all positive elements in a C*-algebra is a closed  positive cone.  We then have that  the set of all positive matrices in $M_n(\mathcal{A})$ is a closed  positive cone. Here comes the first version of C*-algebraic Schur product theorem.

\begin{theorem}\label{COMMUTATIVESCHURPRODUCT} (\textbf{Commutative C*-algebraic version of Schur product theorem})
	Let $\mathcal{A}$ be a  commutative unital C*-algebra. If  $M, N\in M_n(\mathcal{A})$ are positive, then their Schur product $M\circ N$ is also positive. 
\end{theorem}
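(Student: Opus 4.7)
The plan is to reduce to the classical Schur product theorem over $\mathbb{C}$ via Gelfand duality. Since $\mathcal{A}$ is a commutative unital C*-algebra, Gelfand's theorem furnishes a compact Hausdorff space $X$ (the character space of $\mathcal{A}$) and a *-isomorphism $\mathcal{A} \cong C(X)$. Under this identification, every element of $\mathcal{A}$ becomes a continuous function on $X$, and this extends entrywise to a *-isomorphism $M_n(\mathcal{A}) \cong M_n(C(X)) \cong C(X, M_n(\mathbb{C}))$.

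Next I would invoke the standard characterization of positivity in $M_n(C(X))$: a matrix $A = [a_{j,k}] \in M_n(C(X))$ is positive if and only if the scalar matrix $A(x) \coloneqq [a_{j,k}(x)]_{1\leq j,k\leq n} \in M_n(\mathbb{C})$ is positive for every $x \in X$. One direction follows because each evaluation map $\operatorname{ev}_x \colon C(X) \to \mathbb{C}$ is a *-homomorphism, which extends entrywise to a *-homomorphism $M_n(C(X)) \to M_n(\mathbb{C})$ and hence preserves positivity. The converse uses the fact that positivity in a C*-algebra is detected by states, together with the observation that every state on $M_n(C(X))$ decomposes as an integral against a probability measure on $X$ of states on $M_n(\mathbb{C})$; alternatively, one can quote this characterization from any standard reference on matrices over commutative C*-algebras (e.g.\ \cite{MURPHY}).

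With these preliminaries in hand, the heart of the argument is a one-line verification. Given positive $M, N \in M_n(\mathcal{A})$, view them as elements of $M_n(C(X))$, and note that in the commutative case Definition~\ref{CALGEBRAICDEF} simply gives $(M \circ N)(x) = M(x) \circ N(x)$ where on the right we have the classical Schur product in $M_n(\mathbb{C})$. By the preceding paragraph, $M(x), N(x) \succeq 0$ for every $x \in X$, so by the classical Schur product theorem of Schur \cite{SCHUR}, $M(x) \circ N(x) \succeq 0$ for every $x \in X$. Applying the positivity characterization in the reverse direction, we conclude $M \circ N \succeq 0$ in $M_n(\mathcal{A})$.

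The main obstacle, and really the only substantive point beyond bookkeeping, is the pointwise characterization of positivity in $M_n(C(X))$; everything else is bookkeeping with Gelfand duality and a direct appeal to the classical result. Should the author prefer a character-based rather than functional representation, the same proof can be written by observing that for each character $\chi$ of $\mathcal{A}$ the induced map $\chi_n \colon M_n(\mathcal{A}) \to M_n(\mathbb{C})$ satisfies $\chi_n(M \circ N) = \chi_n(M) \circ \chi_n(N)$, and positivity in $M_n(\mathcal{A})$ is detected by the family $\{\chi_n\}$.
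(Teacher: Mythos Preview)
Your argument is correct and is a genuinely different route from the paper's. The paper proves the result by a direct trace computation inside $M_n(\mathcal{A})$: for $x\in\mathcal{A}^n$ it sets $L\coloneqq (M^{1/2})^{\mathrm T}(\operatorname{diag}x)(N^{1/2})^{\mathrm T}$ and, using commutativity of $\mathcal{A}$, verifies
\[
\langle (M\circ N)x,x\rangle \;=\; \operatorname{Tr}\bigl((\operatorname{diag}x^*)\,M\,(\operatorname{diag}x)\,N^{\mathrm T}\bigr)\;=\;\operatorname{Tr}(L^*L)\;\geq\;0.
\]
This is essentially the Gram/trace proof of the classical Schur theorem, transplanted to $M_n(\mathcal{A})$ and relying only on the fact that square roots, transposes, and the identity $x^*(M\circ N)x=\operatorname{Tr}((\operatorname{diag}x^*)M(\operatorname{diag}x)N^{\mathrm T})$ behave as expected when the entries commute.

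By contrast, you reduce everything to the scalar case via Gelfand duality and the pointwise characterization of positivity in $C(X,M_n(\mathbb{C}))$, then invoke the classical Schur theorem as a black box. Your approach is cleaner and more conceptual, and it makes transparent \emph{why} commutativity is the only hypothesis needed; its only cost is the external input (the pointwise positivity criterion and the classical theorem). The paper's approach is self-contained and never leaves $M_n(\mathcal{A})$, but it requires checking that the various matrix manipulations (square roots, transposes, the trace identity) are legitimate over a commutative C*-algebra, and it reproves the classical theorem rather than citing it.
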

\begin{proof}
	Let $ x \in \mathcal{A}^n$ and define $L\coloneqq (M^\frac{1}{2})^\text{T}(\text{diag } x) (N^\frac{1}{2})^\text{T}$. First note that $M\circ N$ is self-adjoint. Using the commutativity of  C*-algebra, we get 
	\begin{align*}
	\langle (M \circ N ) x, x\rangle &=x^*(M \circ N )x=\text{Tr}((\text{diag }x^*)M(\text{diag } x) N^\text{T})\\
	&=\text{Tr}((\text{diag }x^*)M(\text{diag } x) (N^\frac{1}{2})^\text{T}(N^\frac{1}{2})^\text{T})\\
	&=\text{Tr}((N^\frac{1}{2})^\text{T}(\text{diag }x^*)M(\text{diag } x) (N^\frac{1}{2})^\text{T})\\
	&=\text{Tr}((N^\frac{1}{2})^\text{T}(\text{diag }x^*)(M^\frac{1}{2})^\text{T}(M^\frac{1}{2})^\text{T}(\text{diag } x) (N^\frac{1}{2})^\text{T})\\
	&=\text{Tr}(L^*L)\geq 0.
	\end{align*}
	Since $x$ was arbitrary, the result follows.
\end{proof}
 In the sequel, we use the following notation. Given $M\in M_n(\mathcal{A})$, we define
\begin{align*}
(M^\circ)^n\coloneqq M\circ \cdots \circ M  \quad (\text{n times}), \forall n\geq 1,  \quad (M^\circ)^0\coloneqq I \quad \text{(identity matrix in } M_n(\mathcal{A})).
\end{align*} 
\begin{corollary}\label{POLYNOMIALPOSITIVE}
Let $\mathcal{A}$ be a  commutative unital C*-algebra. Let  $M\in M_n(\mathcal{A})$ be positive. If $a_0+a_1z+a_2z^2+\cdots +a_nz^n$ is any polynomial with coefficients from  $\mathcal{A}$ with all $a_0,\dots, a_n$ are positive elements of $\mathcal{A}$, then the matrix 
\begin{align*}
a_0I+a_1M+a_2(M^\circ)^2+\cdots +a_n(M^\circ)^n \in M_n(\mathcal{A})
\end{align*}
is positive.
\end{corollary}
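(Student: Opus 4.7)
The proof should be a straightforward application of Theorem \ref{COMMUTATIVESCHURPRODUCT} combined with the cone structure of positive matrices, so the plan is largely bookkeeping. First I would argue by induction on $k$ that $(M^\circ)^k$ is positive in $M_n(\mathcal{A})$ for every $k \geq 0$. The base case $k=0$ holds because the identity matrix is positive, and $k=1$ is the hypothesis on $M$. For the inductive step, if $(M^\circ)^k$ is positive, then Theorem \ref{COMMUTATIVESCHURPRODUCT} applied to $(M^\circ)^k$ and $M$ gives positivity of $(M^\circ)^{k+1} = (M^\circ)^k \circ M$ (which is well-defined thanks to commutativity of $\mathcal{A}$, where associativity and symmetry of the Schur product hold).

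Next I would show that multiplication by a positive element $a \in \mathcal{A}$ preserves positivity of a matrix in the commutative setting. If $P \in M_n(\mathcal{A})$ is positive and $a \in \mathcal{A}$ is positive, then the matrix $aP$ has entries $aP_{j,k}$, is self-adjoint (because $a$ commutes with every $P_{k,j}^*$ and $a^*=a$), and for any $x \in \mathcal{A}^n$ one computes
\begin{align*}
\langle (aP)x, x\rangle = \sum_{j,k} a\, P_{j,k} x_k x_j^* = a\,\langle Px, x\rangle,
\end{align*}
where the scalar $a$ can be pulled out by commutativity. Since both $a$ and $\langle Px,x\rangle$ are positive elements of a commutative C*-algebra, they commute and their product is positive (if $a=b^2$ and $\langle Px,x\rangle = c^2$ with $b,c$ positive and commuting, then $a\langle Px,x\rangle = (bc)^2 \geq 0$). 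Hence $aP \succeq 0$.

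Finally, since the set of positive elements of $M_n(\mathcal{A})$ is a closed positive cone (as noted in the paper before Theorem \ref{COMMUTATIVESCHURPRODUCT}), it is closed under addition, so
\begin{align*}
a_0 I + a_1 M + a_2 (M^\circ)^2 + \cdots + a_n (M^\circ)^n
\end{align*}
is a sum of positive matrices and therefore positive. There is no real obstacle here; the only subtlety worth flagging is that associativity/symmetry of $\circ$ and the compatibility of scalar multiplication by $a \in \mathcal{A}$ with positivity both rely essentially on the commutativity of $\mathcal{A}$, so the same argument would not immediately transfer to the noncommutative setting of Theorem \ref{NONCOMMUTATIVESCHURPRODUCT}.
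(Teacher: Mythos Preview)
Your proof is correct and follows the same route as the paper, which simply says ``This follows from Theorem \ref{COMMUTATIVESCHURPRODUCT} and Mathematical induction.'' You have just filled in the details the paper leaves implicit---in particular the verification that $a_k(M^\circ)^k \succeq 0$ when $a_k \in \mathcal{A}$ is positive---so there is nothing to correct or compare.
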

\begin{proof}
This follows from 	Theorem \ref{COMMUTATIVESCHURPRODUCT} and Mathematical induction.
\end{proof}
\begin{remark}
Note that we used commutativity of C*-algebra in the proof of Theorem \ref{COMMUTATIVESCHURPRODUCT} and thus it can not be carried over to non commutative C*-algebras.	
\end{remark}
 Theorem \ref{COMMUTATIVESCHURPRODUCT} leads us to seek a similar result for non commutative C*-algebras. At present we don't know Schur product theorem for positive matrices over arbitrary C*-algebras. For the purpose of definiteness, we state it as an open problem.
\begin{question}\label{SCHURPRODUCTPROBLEM}
	 Let $\mathcal{A}$ be a   C*-algebra. \textbf{Given positive matrices  $M, N\in M_n(\mathcal{A})$, does $M\circ N$ is  positive?} In other  words, \textbf{classify those   C*-algebras $\mathcal{A}$  such that   $M\circ N$ is    positive  whenever $M, N\in M_n(\mathcal{A})$ are positive.}
\end{question}
To make some progress to  Question \ref{SCHURPRODUCTPROBLEM}, we give an  affirmative answer  for certain classes of C*-algebras (von Neumann algebras).  To do so we need spectral theorem for matrices over C*-algebras. First let us recall two definitions. 
\begin{definition}\cite{MANUILOV}
A W*-algebra is called 	\textbf{$\sigma$-finite} if it contains no more than a countable set of mutually orthogonal projections.
\end{definition}
\begin{definition}\cite{KAPLANSKYANNALS}
	A C*-algebra $\mathcal{A}$ is called an \textbf{AW*-algebra} if the following conditions hold.
	\begin{enumerate}[\upshape(i)]
		\item Any set of orthogonal projections has  supremum.
		\item Any maximal commutative self-adjoint  subalgebra of  $\mathcal{A}$ is generated by its projections. 
	\end{enumerate}
\end{definition}
\begin{theorem} \cite{MANUILOV, HEUNENREYES}\label{SPECTRALTHEOREM}
	(\textbf{Spectral theorem for Hilbert C*-modules}) Let $\mathcal{A}$ be a $\sigma$-finite W*-algebra or an AW*-algebra. If  
	$M \in  M_n(\mathcal{A})$ is normal, then there exists a unitary matrix $U \in  M_n(\mathcal{A})$ such that $UMU^*$ is a diagonal matrix. 
\end{theorem}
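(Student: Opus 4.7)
The plan is to prove Theorem \ref{SPECTRALTHEOREM} by induction on $n$, mimicking the classical scalar-field proof while replacing each analytic ingredient by its AW*/$\sigma$-finite-W* analogue. The base case $n = 1$ is vacuous, since any element is already ``diagonal''. For the inductive step, starting from a normal $M \in M_n(\mathcal{A})$, I would aim to produce a \emph{unit modular eigenvector}: a pair $(v, \lambda) \in \mathcal{A}^n \times \mathcal{A}$ with $\langle v, v \rangle = 1_{\mathcal{A}}$ and $Mv = v\lambda$. Granted this, a modular Gram--Schmidt procedure extends $v$ to an orthonormal basis of $\mathcal{A}^n$, yielding a unitary $U_1 \in M_n(\mathcal{A})$ with $U_1^* M U_1 = \text{diag}(\lambda) \oplus M'$ for a normal $M' \in M_{n-1}(\mathcal{A})$; applying the induction hypothesis to $M'$ and reassembling the unitaries then closes the argument.

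A key preliminary fact is that both $\sigma$-finite W*-algebras and AW*-algebras are stable under the functor $M_n(\cdot)$ (Kaplansky and Sakai), so $M_n(\mathcal{A})$ is itself of the same type. Consequently the C*-subalgebra $\mathcal{B} \coloneqq C^*(M, M^*, I) \subseteq M_n(\mathcal{A})$, which is commutative because $M$ is normal, sits inside a commutative AW*/$\sigma$-finite W*-subalgebra $\widetilde{\mathcal{B}} \cong C(\Omega)$ with $\Omega$ Stonean (respectively hyperstonean). Borel/characteristic-function functional calculus inside $\widetilde{\mathcal{B}}$ then furnishes a plentiful supply of spectral projections of $M$ living in $M_n(\mathcal{A})$ on which $M$ acts arbitrarily close to a central scalar.

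The main obstacle is converting such a spectral projection $P \in M_n(\mathcal{A})$ into an actual unit modular eigenvector. For this I would appeal to the comparison theory of projections in AW*/$\sigma$-finite W*-algebras: the diagonal matrix unit $e_{11}$ is a ``rank-one'' projection of $M_n(\mathcal{A})$, and the halving and comparability lemmas available in these classes guarantee that any non-zero projection either dominates a subprojection equivalent to $e_{11}$ or can be refined (by intersecting with further spectral projections of $M$) until it does. Such an equivalence is implemented by a partial isometry whose initial space is modularly generated by a single unit vector $v \in \mathcal{A}^n$, and this $v$ is the sought eigenvector, with $\lambda$ read off from the functional calculus. The delicate point is the case where $M$ has purely continuous spectrum and thus no literal eigenvalue in $\mathcal{A}$; the workaround is to decompose $M_n(\mathcal{A})$ along the central cover separating the atomic and diffuse parts of the spectrum of $M$, diagonalize each summand independently, and use that on the diffuse part the spectral projections of $M$ over arbitrarily small clopen neighbourhoods of $\Omega$ still furnish the required unit vectors once lifted through the equivalence with $e_{11}$.
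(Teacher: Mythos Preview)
The paper does not supply a proof of this theorem: it is quoted from the references \cite{MANUILOV, HEUNENREYES} and used as a black box in the proof of Theorem~\ref{NONCOMMUTATIVESCHURPRODUCT}. There is therefore no in-paper argument against which to compare your proposal.

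As for the proposal itself, the inductive skeleton is the classical one, but the step you yourself flag as ``delicate'' is a genuine gap rather than a detail. When $M$ has continuous spectrum there is in general \emph{no} pair $(v,\lambda)\in\mathcal{A}^n\times\mathcal{A}$ with $\langle v,v\rangle=1$ and $Mv=v\lambda$ exactly; your spectral projections only give $\|MP-\lambda P\|<\varepsilon$, and passing from an approximate eigenprojection to an exact modular eigenvector is precisely the hard content of the theorem. The suggested central-cover split into ``atomic'' and ``diffuse'' parts does not help: on the diffuse summand you are back to the original problem with no eigenvectors available, so the induction cannot start. The proofs in the cited literature (Kadison's argument for von Neumann algebras, extended to AW*-algebras in \cite{HEUNENREYES}) bypass eigenvectors altogether: one enlarges the commutative algebra $C^*(M,I)$ to a \emph{maximal} abelian $*$-subalgebra of $M_n(\mathcal{A})$ and then proves, via comparison theory of projections specific to the AW*/$\sigma$-finite-W* setting, that every MASA of $M_n(\mathcal{A})$ is unitarily conjugate to the diagonal copy of $\mathcal{A}^n$. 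Conjugating the whole MASA diagonalizes $M$ at once, and the continuous-spectrum obstruction never arises.
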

\begin{theorem}\label{NONCOMMUTATIVESCHURPRODUCT}(\textbf{Non commutative C*-algebraic Schur product theorem})
Let $\mathcal{A}$ be a $\sigma$-finite W*-algebra or an AW*-algebra and  
$M, N\in M_n(\mathcal{A})$ be positive. Let $U=[u_{j,k}]_{1\leq j,k\leq n}, V=[v_{j,k}]_{1\leq j,k\leq n} \in  M_n(\mathcal{A})$ be unitary  such that 
\begin{align*}
M=U\begin{bmatrix}
\lambda_1& 0&  \cdots & 0\\
0& \lambda_2 & \cdots & 0\\
\vdots & \vdots & \cdots & \vdots \\
0& 0 & \cdots & \lambda_n\\
\end{bmatrix}
U^*, \quad N=V\begin{bmatrix}
\mu_1& 0&  \cdots & 0\\
0& \mu_2 & \cdots & 0\\
\vdots & \vdots & \cdots & \vdots \\
0& 0 & \cdots & \mu_n\\
\end{bmatrix}
V^*,
\end{align*}
for some $\lambda_1,\dots, \lambda_n , \mu_1,\dots, \mu_n \in \mathcal{A}$. If all $\lambda_j, \mu_k, u_{l,m}, v_{r,s}$, $1\leq j,k,l,m,r,s\leq n$ commute with each other,  then the Schur product $M\circ N$ is also positive.	
\end{theorem}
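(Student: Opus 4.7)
The plan is to reduce the claim to the commutative Schur product theorem (Theorem \ref{COMMUTATIVESCHURPRODUCT}) by trapping $M$, $N$, and hence $M \circ N$, inside a commutative unital C*-subalgebra of $\mathcal{A}$.

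First I would note that since $M, N \succeq 0$, the spectral diagonals $\lambda_j, \mu_k$ are positive, hence self-adjoint, elements of $\mathcal{A}$. I would then let $\mathcal{B}$ be the unital C*-subalgebra of $\mathcal{A}$ generated by $\{\lambda_j, \mu_k, u_{l,m}, v_{r,s}\}_{j,k,l,m,r,s}$ together with all their adjoints. Under the commutativity hypothesis—with Fuglede's theorem handling the adjoints of the normal elements $\lambda_j, \mu_k$, and the unitarity of $U, V$ together with the given commutativity of the $u_{l,m}, v_{r,s}$ supplying the remaining adjoint commutations—one argues that $\mathcal{B}$ is commutative. All entries of $U, U^*, V, V^*$ and of the diagonals $D_\lambda, D_\mu$ then sit in $\mathcal{B}$, so $M = U D_\lambda U^*$, $N = V D_\mu V^*$, and $M \circ N$ all belong to $M_n(\mathcal{B})$.

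I would next apply Theorem \ref{COMMUTATIVESCHURPRODUCT} to the commutative unital C*-algebra $\mathcal{B}$ to conclude $M \circ N \succeq 0$ in $M_n(\mathcal{B})$. Since positivity in a C*-algebra is intrinsic, one has $M \circ N = C^* C$ for some $C \in M_n(\mathcal{B}) \subseteq M_n(\mathcal{A})$, and consequently for every $x \in \mathcal{A}^n$,
\begin{align*}
\langle (M \circ N) x, x \rangle = \langle C^* C x, x \rangle = \langle Cx, Cx \rangle = \sum_{l=1}^{n} (Cx)_l (Cx)_l^* \geq 0,
\end{align*}
which yields $M \circ N \succeq 0$ in $M_n(\mathcal{A})$.

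The main obstacle is verifying that $\mathcal{B}$ is genuinely commutative. Because the $u_{l,m}, v_{r,s}$ are entries of unitary matrices but need not be normal individually, the raw hypothesis that the non-adjoint entries commute does not automatically force the adjoints $u_{l,m}^*, v_{r,s}^*$ to commute across the whole family. Either one must interpret the phrase ``commute with each other'' as $*$-commutativity, or one must exploit Fuglede together with the orthogonality relations $U^* U = I = V^* V$ to tease out the missing commutations; this delicate bookkeeping is the crux of the argument, after which the reduction above finishes the proof mechanically.
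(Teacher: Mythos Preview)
Your route is correct but genuinely different from the paper's. The paper does not pass to a commutative subalgebra; instead it argues directly from the spectral data. Writing $u_1,\dots,u_n$ for the columns of $U$ and $v_1,\dots,v_n$ for the columns of $V$, it expands $M=\sum_j \lambda_j u_j u_j^*$, $N=\sum_k \mu_k v_k v_k^*$, uses bilinearity of $\circ$, and then invokes the entrywise identity $(u_j u_j^*)\circ(v_k v_k^*)=(u_j\circ v_k)(u_j\circ v_k)^*$ together with the fact that $\lambda_j\mu_k\geq 0$ (product of commuting positives) to obtain
\[
M\circ N=\sum_{j,k}\lambda_j\mu_k\,(u_j\circ v_k)(u_j\circ v_k)^*\succeq 0.
\]
So the paper produces an explicit positive decomposition of $M\circ N$, whereas your argument is a clean structural reduction to Theorem~\ref{COMMUTATIVESCHURPRODUCT}. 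Your approach is more conceptual and reusable; the paper's is more concrete and yields an explicit sum-of-squares form.

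Two small remarks on your write-up. First, Fuglede is a red herring for $\lambda_j,\mu_k$: these are self-adjoint, so commutation with them is automatically $*$-commutation by taking adjoints. Second, your instinct that the unitarity relations close the gap for the $u$'s and $v$'s is correct, and it is worth spelling out: from $u_{l,k}v_{r,s}=v_{r,s}u_{l,k}$ one gets, for each fixed $j,r,s$, that $\sum_l\bigl(v_{r,s}u_{l,j}^*-u_{l,j}^*v_{r,s}\bigr)u_{l,k}=0$ for all $k$, i.e.\ the row vector $\bigl(v_{r,s}u_{l,j}^*-u_{l,j}^*v_{r,s}\bigr)_l$ annihilates $U$ on the right; invertibility of $U$ forces each entry to vanish. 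The remaining adjoint commutations follow by symmetry and by taking adjoints. Note that the paper's computation tacitly relies on exactly these adjoint commutations as well (they are needed for the identity $(u_ju_j^*)\circ(v_kv_k^*)=(u_j\circ v_k)(u_j\circ v_k)^*$), so you have in fact isolated a point the paper glosses over.
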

\begin{proof}
Let $\{u_1, \dots, u_n\}$ be columns of $U$  and $\{v_1, \dots, v_n\}$ be columns  of $V$. Then 
\begin{align*}
A =\sum_{j=1}^{n}\lambda_j u_j u_j^*, \quad  B =\sum_{k=1}^{n}\mu_k v_k v_k^*
\end{align*}
where $\lambda_1,\dots, \lambda_n $ are eigenvalues of $A$, $\{u_1, \dots, u_n\}$ is an orthonormal basis for $\mathcal{A}^n$,  $\mu_1,\dots, \mu_n $ are eigenvalues of $B$ and   $\{v_1, \dots, v_n\}$ is an orthonormal basis for $\mathcal{A}^n$ (they exist from Theorem \ref{SPECTRALTHEOREM}).	Definition \ref{MYDEFINITION} of Schur product says that $M\circ N$ is self-adjoint. It is well known in the theory of C*-algebras that sum of positive elements in a C*-algebra is positive and the product of two commuting positive elements is positive.  This observation, Theorem \ref{SCHURPRODUCTPROPERTIES} and the following calculation shows that $M\circ N$ is positive: 
\begin{align*}
M\circ N &=\left(\sum_{j=1}^{n}\lambda_j u_j u_j^*\right)\circ \left(\sum_{k=1}^{n}\mu_k v_k v_k^*\right)=\sum_{j=1}^{n}\sum_{k=1}^{n}\lambda_j \mu_k (u_ju_j^*)\circ (v_kv_k^*)\\
&=\sum_{j=1}^{n}\sum_{k=1}^{n}\lambda_j \mu_k (u_j \circ v_k)(u_j \circ v_k)^*\succeq 0.
\end{align*}
\end{proof}
Since the \textbf{spectral theorem fails for matrices over C*-algebras} (see \cite{GROVE, KADISON, KADISON2}), proof of Theorem \ref{NONCOMMUTATIVESCHURPRODUCT} can not be executed for  arbitrary   C*-algebras.\\
Given certain order structure, one naturally considers  functions (in a suitable way) which preserve the order. For matrices over C*-algebras, we formulate this in the following definition.
\begin{definition}\label{POLYASZEGO}
	Let $\mathcal{B}$ be a subset of  a C*-algebra $\mathcal{A}$ and  $n$ be a  natural number. Define   $\mathcal{P}_n(\mathcal{B})$ as the set of all $n$ by $n$ positive matrices with entries from $\mathcal{B}$. Given a function $f:\mathcal{B}\to \mathcal{A}$,
	define a function 
	\begin{align*}
	\mathcal{P}_n(\mathcal{B}) \ni A\coloneqq \begin{bmatrix}
	a_{j,k}
	\end{bmatrix}_{1\leq j,k\leq n} \mapsto f[A]\coloneqq \begin{bmatrix}
	f(a_{j,k})
	\end{bmatrix}_{1\leq j,k\leq n}\in M_n(\mathcal{A}).
	\end{align*}
	A function  $f:\mathcal{B}\to \mathcal{A}$ is said to be a \textbf{positivity preserver in all  dimensions} if 
	\begin{align*}
	f[A]\in \mathcal{P}_n(\mathcal{A}), \quad \forall A \in \mathcal{P}_n(\mathcal{B}), \quad \forall n \in \mathbb{N}.
	\end{align*}
	A	function  $f:\mathcal{B}\to \mathcal{A}$ is said to be a \textbf{positivity preserver in fixed  dimension} $n$ if 
	\begin{align*}
	f[A]\in \mathcal{P}_n(\mathcal{A}), \quad \forall A \in \mathcal{P}_n(\mathcal{B}).
	\end{align*}
\end{definition}
We now have the important C*-algebraic P\'{o}lya-Szeg\H{o}-Rudin open problem.
\begin{question}\label{QUESTIONPOLYA}
	(\textbf{P\'{o}lya-Szeg\H{o}-Rudin question for C*-algebraic Schur product of positive matrices}) 
	Let $\mathcal{B}$ be a subset of a (commutative)  C*-algebra $\mathcal{A}$ and $\mathcal{P}_n(\mathcal{B})$ be as in Definition \ref{POLYASZEGO}.
	\begin{enumerate}[\upshape(i)]
		\item \textbf{Characterize $f$ such that $f$ is a positivity preserver for all $n \in \mathbb{N}$.}
		\item \textbf{Characterize $f$ such that $f$ is a positivity preserver for fixed $n$.}
	\end{enumerate}
\end{question}

Answer to (i) in Question \ref{QUESTIONPOLYA} in the case $\mathcal{A}=\mathbb{R}$ (which is due to P\'{o}lya and Szeg\H{o} \cite{POLYASZEGO}) is known from the works of Schoenberg \cite{SCHOENBERGDUKE}, Vasudeva \cite{VASUDEVA}, Rudin \cite{RUDINDUKE}, Christensen and Ressel \cite{CHRISTENSENRESSEL}. Further the answer to Question (i) in the case $\mathcal{A}=\mathbb{C}$ (which is due to Rudin \cite{RUDINDUKE}) is also known from the work of Herz \cite{HERZ}. There are certain partial answers to  (ii) in Question \ref{QUESTIONPOLYA} from the works of Horn \cite{HORNTHE}, Belton, Guillot, Khare, Putinar, Rajaratnam and Tao \cite{BELTONGUILLOTKHAREPUTINAR, GUILLOTKHARERAJARATNAM, APOORVA1, APOORVA2, TAOAPOORVAKHARE}.\\
Corollary \ref{POLYNOMIALPOSITIVE} and the observation that the set of all positive matrices in  $M_n(\mathcal{A})$ is a  closed set   gives a partial answer to (i) in Question \ref{QUESTIONPOLYA}.

\begin{theorem}
Let $\mathcal{A}$ be a commutative unital C*-algebra. Let the power series $f(z)	\coloneqq \sum_{n=0}^{\infty}a_nz^n$ over $\mathcal{A}$ be convergent on a subset $\mathcal{B}$  of   $\mathcal{A}$. If all  $a_n$'s are positive elements of $\mathcal{A}$, then the matrix 
\begin{align*}
f[A]=\sum_{n=0}^{\infty}a_n(A^\circ)^n \in  M_m(\mathcal{A})
\end{align*}
is positive for all positive $ A \in M_m(\mathcal{A})$, for all $m \in \mathbb{N}$. In other words, a \textbf{convergent power series over a commutative unital C*-algebra with positive elements as coefficients is a positivity preserver in all dimensions}.
\end{theorem}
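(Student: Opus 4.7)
The plan is a straightforward two-step argument: use Corollary \ref{POLYNOMIALPOSITIVE} to handle polynomial truncations, then invoke closedness of the positive cone to pass to the limit.

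First, for each $N \in \mathbb{N}$ let $f_N(z) \coloneqq \sum_{n=0}^{N} a_n z^n$. Since every $a_n$ is positive in $\mathcal{A}$ and $A \in M_m(\mathcal{A})$ is positive, Corollary \ref{POLYNOMIALPOSITIVE} (applied with $m$ in place of $n$) directly yields
\[
f_N[A] = \sum_{n=0}^{N} a_n (A^\circ)^n \succeq 0 \quad \text{in } M_m(\mathcal{A}).
\]
Moreover, because $\mathcal{A}$ is commutative, the $(j,k)$-entry of $(A^\circ)^n$ is $a_{j,k}^n$, so the $(j,k)$-entry of $f_N[A]$ is $\sum_{n=0}^{N} a_n a_{j,k}^n = f_N(a_{j,k})$, aligning the Schur-power formula with the entry-wise application from Definition \ref{POLYASZEGO}.

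Second, I would pass to the limit $N \to \infty$. Assuming (as the statement implicitly requires, for $f[A]$ even to be defined) that every entry $a_{j,k}$ lies in $\mathcal{B}$, the convergence hypothesis gives $f_N(a_{j,k}) \to f(a_{j,k})$ in $\mathcal{A}$ for each $(j,k)$. Using the elementary bound
\[
\|M\|_{M_m(\mathcal{A})} \leq \sum_{j,k=1}^{m} \|m_{j,k}\|_{\mathcal{A}},
\]
which holds via the isometric embedding $\pi : M_m(\mathcal{A}) \hookrightarrow M_m(\mathcal{B}(\mathcal{H}))$ (operator norm of a matrix is dominated by the sum of the norms of its entries), entry-wise convergence of the $f_N(a_{j,k})$ upgrades to norm convergence $f_N[A] \to f[A]$ in $M_m(\mathcal{A})$.

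Third, the paper has already noted that the positive matrices form a closed cone in $M_m(\mathcal{A})$; so the norm limit $f[A]$ of the positive matrices $f_N[A]$ is itself positive. The main (and rather mild) obstacle is the bookkeeping around ensuring $f[A]$ is well-defined entry-wise and coincides with the Schur-power series, which the commutativity of $\mathcal{A}$ settles cleanly. No deeper difficulty is anticipated, and since $m$ was arbitrary the conclusion holds in every dimension.
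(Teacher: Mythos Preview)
Your proposal is correct and follows exactly the approach the paper indicates: the paper simply remarks that the theorem is immediate from Corollary~\ref{POLYNOMIALPOSITIVE} together with the closedness of the cone of positive matrices in $M_m(\mathcal{A})$. Your write-up just fleshes out the convergence step (entry-wise to norm convergence) more carefully than the paper does.
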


\section{Lower bounds for C*-algebraic Schur product}\label{SECTION3}
Our first result is on the lower bound of positive matrices over C*-algebras.
\begin{theorem}\label{FIRSTTHEOREM}
Let $\mathcal{A}$ be a unital C*-algebra (need not be commutative)	and  $ A \in M_n(\mathcal{A})$ be a positive matrix. Let $M=AA^*$ and $y\in \mathcal{A}^n$ be the vector of row sums of $A$. Then 
	\begin{align*}
	M\succeq \frac{1}{n}yy^*,
	\end{align*}
	i.e., 
	\begin{align}\label{FUNDAMENTAL}
	\langle Mx,x\rangle \geq \frac{1}{n}\langle x,x\rangle, \quad \forall x \in \mathcal{A}^n.
	\end{align}
\end{theorem}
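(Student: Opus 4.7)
The plan is to adapt Vyb\'{\i}ral's scalar argument essentially verbatim, the only task being to verify that the matrix manipulations survive the passage to a noncommutative C*-algebra. Let $e \in \mathcal{A}^n$ denote the column vector whose every entry is $1_\mathcal{A}$, and set $E_n \coloneqq ee^* \in M_n(\mathcal{A})$, the $n\times n$ matrix all of whose entries are $1_\mathcal{A}$. Then the row-sum vector is $y = Ae$, and so
\begin{align*}
yy^* = (Ae)(Ae)^* = A(ee^*)A^* = A E_n A^*.
\end{align*}
Combined with $M = AA^*$, this gives the algebraic identity
\begin{align*}
M - \tfrac{1}{n}yy^* \;=\; A\bigl(I - \tfrac{1}{n}E_n\bigr)A^*.
\end{align*}

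The second step is to show that $P \coloneqq I - \tfrac{1}{n}E_n$ is a self-adjoint projection in the C*-algebra $M_n(\mathcal{A})$. Self-adjointness is immediate since $E_n^* = E_n$. A direct entrywise computation yields $E_n^2 = n E_n$ (each entry is a sum of $n$ copies of $1_\mathcal{A}\cdot 1_\mathcal{A}$), and hence
\begin{align*}
P^2 = I - \tfrac{2}{n}E_n + \tfrac{1}{n^2}E_n^2 = I - \tfrac{2}{n}E_n + \tfrac{1}{n}E_n = P.
\end{align*}

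Finally, using $P = P^* = P^2$ and the rule $(AP)^* = P^* A^* = P A^*$, one rewrites
\begin{align*}
M - \tfrac{1}{n}yy^* = A P A^* = A P^2 A^* = (AP)(PA^*) = (AP)(AP)^*,
\end{align*}
which is positive in $M_n(\mathcal{A})$ as a hermitian square, i.e., an element of the form $XX^*$ in a C*-algebra. This yields $M \succeq \tfrac{1}{n}yy^*$, and the inner-product formulation \eqref{FUNDAMENTAL} is then obtained by unpacking the definition of positivity of a matrix in $M_n(\mathcal{A})$ given earlier in the paper.

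There is essentially no obstacle: noncommutativity is absorbed entirely by the involution, since the identities $(Ae)^* = e^*A^*$, $(ee^*)^* = ee^*$ and $(AP)^* = PA^*$ hold in any $*$-algebra, and positivity of $XX^*$ is a standard fact in any C*-algebra. Notably, the hypothesis that $A$ be positive (rather than merely a matrix with $M = AA^*$) is not actually used in the argument; all that is needed is that $y$ be the row sums of a square root $A$ of $M$.
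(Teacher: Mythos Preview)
Your proof is correct and takes a genuinely different route from the paper. The paper expands both sides in coordinates, recognizes $\tfrac{1}{n}\langle yy^*x,x\rangle$ as $\tfrac{1}{n}\langle A^*x,e_n\rangle\langle e_n,A^*x\rangle$, and then invokes the Cauchy--Schwarz inequality for Hilbert C*-modules (Lemma~\ref{CAUCHYSCHWARZ}) to bound this by $\langle A^*x,A^*x\rangle=\langle Mx,x\rangle$. You bypass Cauchy--Schwarz altogether by factoring the difference $M-\tfrac{1}{n}yy^*$ as $(AP)(AP)^*$ via the projection $P=I-\tfrac{1}{n}E_n$. Underneath, the two arguments are related---your projection identity is exactly the operator-level statement that Cauchy--Schwarz against the fixed vector $e$ holds as an operator inequality---but your version is cleaner: it avoids all coordinate calculations, needs no Hilbert C*-module machinery beyond the definition of positivity, and displays the difference explicitly as a Hermitian square. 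Your closing remark that the positivity of $A$ is never used is also correct; any $A\in M_n(\mathcal{A})$ with $M=AA^*$ would do.
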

\begin{proof}
Set

\begin{align*}
A\coloneqq \begin{bmatrix}
a_{1,1} & a_{1,2} &  \cdots & a_{1,n}\\
a_{2,1} & a_{2,2} &  \cdots& a_{2,n}\\
\vdots &\vdots  &   &\vdots\\
a_{n,1}&a_{n,2}&\cdots& a_{n,n}
\end{bmatrix} \in M_n(\mathcal{A}), \quad x\coloneqq 
\begin{pmatrix}
x_1 \\
x_2 \\
\vdots \\
x_n\\
\end{pmatrix}\in \mathcal{A}^n, \quad y\coloneqq 
\begin{pmatrix}
y_1 \\
y_2 \\
\vdots \\
y_n\\
\end{pmatrix}\in \mathcal{A}^n.
\end{align*}
Since $y$ is the vector of row sums of $A$, we have 
\begin{align*}
y_j=\sum_{k=1}^{n}a_{j,k}, \quad \forall 1\leq j \leq n.
\end{align*}
Consider 
\begin{align*}
\langle Mx, x\rangle &=\langle AA^*x, x\rangle=\langle A^*x, A^*x\rangle=\left\langle \begin{pmatrix}
\sum_{k=1}^{n}a_{k,1}^*x_k \\
\sum_{k=1}^{n}a_{k,2}^*x_k\\
\vdots \\
\sum_{k=1}^{n}a_{k,n}^*x_k\\
\end{pmatrix}, \begin{pmatrix}
\sum_{l=1}^{n}a_{l,1}^*x_l \\
\sum_{l=1}^{n}a_{l,2}^*x_l \\
\vdots \\
\sum_{l=1}^{n}a_{l,n}^*x_l\\
\end{pmatrix}\right\rangle \\
&=\sum_{j=1}^{n}\left(\sum_{k=1}^{n}a_{k,j}^*x_k\right)\left(\sum_{l=1}^{n}a_{l,j}^*x_l\right)^*=\sum_{j=1}^{n}\sum_{k=1}^{n}\sum_{l=1}^{n}a_{k,j}^*x_kx_l^*a_{l,j}
\end{align*}
which is the left side of Inequality (\ref{FUNDAMENTAL}). 
Set 
\begin{align*}
e_n\coloneqq 
\begin{pmatrix}
1 \\
1 \\
\vdots \\
1\\
\end{pmatrix}\in \mathcal{A}^n, \quad z\coloneqq  \begin{pmatrix}
z_1 \\
z_2 \\
\vdots \\
z_n\\
\end{pmatrix}\coloneqq 
\begin{pmatrix}
\sum_{k=1}^{n}a_{k,1}^*x_k \\
\sum_{k=1}^{n}a_{k,2}^*x_k \\
\vdots \\
\sum_{k=1}^{n}a_{k,n}^*x_k\\
\end{pmatrix}\in \mathcal{A}^n.
\end{align*}
We now consider the right side of Inequality (\ref{FUNDAMENTAL}) and use Lemma \ref{CAUCHYSCHWARZ} to get 

\begin{align*}
\frac{1}{n}\langle yy^*x, x\rangle &=\frac{1}{n}\langle y^*x, y^*x\rangle\\
&=\frac{1}{n}\left\langle \begin{pmatrix}
y_1^* &y_2^*&\dots &  y_n^*
\end{pmatrix}\begin{pmatrix}
x_1 \\
x_2 \\
\vdots \\
x_n\\
\end{pmatrix}, \begin{pmatrix}
y_1^* &y_2^*&\dots &  y_n^*
\end{pmatrix}\begin{pmatrix}
x_1 \\
x_2 \\
\vdots \\
x_n\\
\end{pmatrix}\right\rangle \\
&=\frac{1}{n}\left(\sum_{k=1}^{n}y_k^*x_k\right)\left(\sum_{l=1}^{n}y_l^*x_l\right)^*=\frac{1}{n}\left(\sum_{k=1}^{n}y_k^*x_k\right)\left(\sum_{l=1}^{n}x_l^*y_l\right)\\
&=\frac{1}{n}\left(\sum_{k=1}^{n}\sum_{r=1}^{n}a_{k,r}^*x_k\right)\left(\sum_{l=1}^{n}x_l^*\sum_{s=1}^{n}a_{l,s}\right)\\
&=\frac{1}{n}\left(\sum_{k=1}^{n}\sum_{r=1}^{n}a_{k,r}^*x_k\right)\left(\sum_{l=1}^{n}\sum_{s=1}^{n}x_l^*a_{l,s}\right)\\
&=\frac{1}{n}\left(\sum_{r=1}^{n}\sum_{k=1}^{n}a_{k,r}^*x_k\right)\left(\sum_{s=1}^{n}\sum_{l=1}^{n}x_l^*a_{l,s}\right)\\
&=\frac{1}{n}\left(\sum_{r=1}^{n}\left(\sum_{k=1}^{n}a_{k,r}^*x_k\right).1\right)\left(\sum_{s=1}^{n}1.\left(\sum_{l=1}^{n}a_{l,s}^*x_l\right)^*\right)\\
&=\frac{1}{n}\left\langle
\begin{pmatrix}
\sum_{k=1}^{n}a_{k,1}^*x_k \\
\sum_{k=1}^{n}a_{k,2}^*x_k \\
\vdots \\
\sum_{k=1}^{n}a_{k,n}^*x_k\\
\end{pmatrix}, \begin{pmatrix}
1 \\
1 \\
\vdots \\
1\\
\end{pmatrix} \right\rangle 
\left\langle\begin{pmatrix}
1 \\
1 \\
\vdots \\
1\\
\end{pmatrix}, 
\begin{pmatrix}
\sum_{k=1}^{n}a_{k,1}^*x_k \\
\sum_{k=1}^{n}a_{k,2}^*x_k \\
\vdots \\
\sum_{k=1}^{n}a_{k,n}^*x_k\\
\end{pmatrix}\right\rangle \\
&=\frac{1}{n}\langle z, e_n\rangle \langle  e_n, z\rangle \leq \frac{1}{n}\|\langle e_n,e_n \rangle \|\langle z, z\rangle =\langle z, z\rangle\\
&=\sum_{j=1}^nz_jz_j^*=\sum_{j=1}^n\left(\sum_{k=1}^{n}a_{k,j}^*x_k\right)\left(\sum_{l=1}^{n}a_{l,j}^*x_l\right)^*\\
&=\sum_{j=1}^n\sum_{k=1}^{n}\sum_{l=1}^{n}a_{k,j}^*x_kx_l^*a_{l,j}=\langle Mx, x\rangle 
\end{align*}
which is the required inequality.
\end{proof}

\begin{theorem}\label{MYTHEOREM}
Let $\mathcal{A}$ be a commutative unital C*-algebra.	Let $ M,N \in M_n(\mathcal{A})$ be  positive matrices. Let $M=AA^*$, $M=BB^*$ and $y\in \mathcal{A}^n$ be the vector of row sums of $A\circ B$. Then 
	\begin{align*}
	M\circ N \succeq  (A\circ B)(A\circ B)^*\succeq\frac{1}{n}yy^*.
	\end{align*}	
\end{theorem}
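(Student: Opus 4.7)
The plan is to split the two inequalities and handle them separately, in reverse order.

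For the inequality $(A\circ B)(A\circ B)^* \succeq \frac{1}{n}yy^*$, I would observe that the proof of Theorem \ref{FIRSTTHEOREM} never actually used positivity of the matrix called ``$A$'' there — only the algebraic identity $M = AA^*$ and the definition of $y$ as row sums. So the same argument applies verbatim to the matrix $A\circ B$ (which here plays the role of ``$A$'') and its associated row-sum vector $y$. Thus this second inequality is an immediate application of Theorem \ref{FIRSTTHEOREM} with $A\circ B$ substituted for $A$.

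For the first inequality $M\circ N \succeq (A\circ B)(A\circ B)^*$, the plan is to exhibit the difference as an explicit sum of rank-one positive pieces. For each pair $(l,m)\in\{1,\dots,n\}^2$, define a column vector $v^{(l,m)}\in\mathcal{A}^n$ by
\begin{align*}
v^{(l,m)}_j \coloneqq a_{j,l}\,b_{j,m}, \qquad 1\leq j\leq n.
\end{align*}
Each rank-one matrix $v^{(l,m)}(v^{(l,m)})^*$ is positive in $M_n(\mathcal{A})$. I would then compute the $(j,k)$-entry of $\sum_{l,m}v^{(l,m)}(v^{(l,m)})^*$ as $\sum_{l,m}a_{j,l}b_{j,m}b_{k,m}^*a_{k,l}^*$, and use commutativity of $\mathcal{A}$ to reorder this as $\bigl(\sum_l a_{j,l}a_{k,l}^*\bigr)\bigl(\sum_m b_{j,m}b_{k,m}^*\bigr) = M_{j,k}N_{j,k} = (M\circ N)_{j,k}$, since in the commutative case Definition \ref{CALGEBRAICDEF} reduces to entrywise multiplication. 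Separately, the diagonal part $\sum_l v^{(l,l)}(v^{(l,l)})^*$ has $(j,k)$-entry $\sum_l a_{j,l}b_{j,l}b_{k,l}^*a_{k,l}^* = \sum_l (a_{j,l}b_{j,l})(a_{k,l}b_{k,l})^*$, which is exactly $((A\circ B)(A\circ B)^*)_{j,k}$. Consequently,
\begin{align*}
M\circ N \;-\; (A\circ B)(A\circ B)^* \;=\; \sum_{\substack{1\leq l,m\leq n \\ l\neq m}} v^{(l,m)}(v^{(l,m)})^* \;\succeq\; 0,
\end{align*}
because the set of positive matrices in $M_n(\mathcal{A})$ is a cone.

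The main obstacle, such as it is, is bookkeeping: making sure the double sum over $(l,m)$ is reorganized correctly and that commutativity is invoked at precisely the step that swaps $b_{j,m}$ past $a_{k,l}^*$ (this is where the non-commutative extension would fail, explaining why the hypothesis is needed). Everything else is a direct transcription of Vybíral's scalar argument into the Hilbert C*-module setting, with positivity of the pieces guaranteed by the fact that $xx^*\succeq 0$ for any $x\in\mathcal{A}^n$ and with closure of the positive cone handling the finite sum.
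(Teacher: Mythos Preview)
Your proof is correct and takes essentially the same approach as the paper: the paper writes the decomposition in column-vector form as $M\circ N=\sum_{j,k}(A_j\circ B_k)(A_j\circ B_k)^*$ (with $A_j,B_k$ the columns of $A,B$), and your $v^{(l,m)}$ is precisely $A_l\circ B_m$, so the two arguments are the same up to notation. Your observation that the proof of Theorem~\ref{FIRSTTHEOREM} never uses the positivity hypothesis on $A$ is a genuine clarification, since the paper applies that theorem to $A\circ B$ without checking (or being able to check, for general factorizations) that $A\circ B$ is positive.
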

\begin{proof}
Let $\{A_1, \dots, A_n\}$ be columns of $A$  and $\{B_1, \dots, B_n\}$ be columns  of $B$. Then using commutativity and Theorem \ref{FIRSTTHEOREM} we get 

\begin{align*}
M\circ N & =(AA^*)\circ (BB^*)=\left(\sum_{j=1}^{n}A_jA_j^*\right)\circ \left(\sum_{k=1}^{n}B_kB_k^*\right)\\
&=\sum_{j=1}^{n}\sum_{k=1}^{n}((A_jA_j^*)\circ (B_kB_k^*))=\sum_{j=1}^{n}\sum_{k=1}^{n}(A_j\circ B_k)(A_j\circ B_k)^*\\
&\succeq\sum_{j=1}^{n} (A_j\circ B_j)(A_j\circ B_j)^*=(A\circ B)(A\circ B)^*\succeq\frac{1}{n}yy^*.
\end{align*}	
\end{proof}
\begin{corollary}\label{FIRSTCOROLLARY}
	Let $ M\in M_n(\mathcal{A})$ be a  positive matrix.  Then 
	\begin{align*}
	M\circ M \succeq \frac{1}{n}(\text{diag }M)(\text{diag }M)^*.
	\end{align*}	
\end{corollary}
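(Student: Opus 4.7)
The plan is to obtain Corollary \ref{FIRSTCOROLLARY} as the $N=M$ specialisation of Theorem \ref{MYTHEOREM}, with a single judicious choice of factorisation. Since $M_n(\mathcal{A})$ is itself a unital C*-algebra, the positive matrix $M$ has a positive (hence self-adjoint) square root $M^{1/2}\in M_n(\mathcal{A})$. Taking $A=B=M^{1/2}$ simultaneously supplies the two factorisations $M=AA^{*}=BB^{*}$ required by the hypothesis of Theorem \ref{MYTHEOREM} in one stroke. Applying that theorem yields
\begin{equation*}
M\circ M \;\succeq\; (M^{1/2}\circ M^{1/2})(M^{1/2}\circ M^{1/2})^{*} \;\succeq\; \frac{1}{n}\,yy^{*},
\end{equation*}
where $y\in\mathcal{A}^{n}$ is the row-sum vector of $A\circ B = M^{1/2}\circ M^{1/2}$.

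It then remains to identify $y$ with $\text{diag}(M)$. Writing $a_{jk}:=(M^{1/2})_{jk}$, commutativity of $\mathcal{A}$ forces $(M^{1/2}\circ M^{1/2})_{jk}=a_{jk}^{2}$, so the $j$th row sum is $y_{j}=\sum_{k=1}^{n}a_{jk}^{2}$. On the other hand, the diagonal of $M=(M^{1/2})(M^{1/2})^{*}$ is $M_{jj}=\sum_{k=1}^{n}a_{jk}\,a_{jk}^{*}$, and self-adjointness of the matrix $M^{1/2}$ imposes the symmetry relation $a_{kj}=a_{jk}^{*}$. The reconciliation of $\sum_{k}a_{jk}^{2}$ with $\sum_{k}a_{jk}\,a_{jk}^{*}$ is the single substantive step I anticipate, and it should rely crucially on commutativity of $\mathcal{A}$, which allows entries of $M^{1/2}$ to be freely permuted with their adjoints and matched up via the self-adjointness relation.

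Once $y=\text{diag}(M)$ has been verified, the display above is precisely the desired inequality $M\circ M \succeq \frac{1}{n}(\text{diag}\,M)(\text{diag}\,M)^{*}$. All the heavy machinery -- the Cauchy--Schwarz inequality for Hilbert C*-modules, the lower bound of Theorem \ref{FIRSTTHEOREM}, and the column decomposition underlying Theorem \ref{MYTHEOREM} -- has already been absorbed upstream, so no further estimates are required and the argument reduces to the bookkeeping just described.
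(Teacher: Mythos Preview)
Your approach --- specialise Theorem \ref{MYTHEOREM} with $N=M$ and $A=B=M^{1/2}$, then identify the row-sum vector $y$ of $A\circ A$ with $\text{diag}(M)$ --- is exactly the route the paper takes (the paper simply writes ``Let $B=A$'' and asserts that the diagonal entries of $M$ are the row sums of $A\circ A$). The difficulty is the identification step, which you rightly singled out as the one substantive point but which does \emph{not} follow from commutativity of $\mathcal{A}$.

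With $A=M^{1/2}$ self-adjoint one has $a_{kj}=a_{jk}^{*}$, hence $M_{jj}=\sum_k a_{jk}a_{jk}^{*}=\sum_k a_{jk}a_{kj}$, whereas the $j$th row sum of $A\circ A$ is $y_j=\sum_k a_{jk}^{2}$. Commutativity of $\mathcal{A}$ lets entries slide past one another, but it does not force $a_{jk}^{*}=a_{jk}$, so these two sums differ in general. Already for $\mathcal{A}=\mathbb{C}$ and $M=\bigl(\begin{smallmatrix}1&i\\-i&1\end{smallmatrix}\bigr)\succeq 0$ one finds $M\circ M=\bigl(\begin{smallmatrix}1&-1\\-1&1\end{smallmatrix}\bigr)$ and $\tfrac{1}{2}(\text{diag}\,M)(\text{diag}\,M)^{*}=\tfrac{1}{2}\bigl(\begin{smallmatrix}1&1\\1&1\end{smallmatrix}\bigr)$; their difference has determinant $-2$ and is not positive. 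Thus the reconciliation you hoped commutativity would deliver is impossible: the gap is genuine, and it is in fact shared by the paper's own one-line argument. (In Vyb\'{\i}ral's scalar result the inequality that actually comes out of the choice $B=A$ is $M\circ\overline{M}\succeq\tfrac{1}{n}(\text{diag}\,M)(\text{diag}\,M)^{*}$, via $B=\overline{A}$, for which the row sums of $A\circ B$ really are the diagonal of $M$; the $M\circ M$ version requires the entries to be real.)
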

\begin{proof}
	Let $B=A$ in Theorem \ref{MYTHEOREM}. Result follows by noting that diagonal entries of $M$ are row sums of $A\circ A$.  
\end{proof}
Following corollary is immediate from Corollary \ref{FIRSTCOROLLARY}.
\begin{corollary}\label{SECONDCOROLLARY}
	Let $ M\in M_n(\mathcal{A})$ be a  positive matrix such that all diagonal entries  of $ M$ are one's.  Then 
	\begin{align*}
	M\circ M \succeq \frac{1}{n}E_n.
	\end{align*}	
\end{corollary}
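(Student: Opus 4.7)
The plan is to derive this directly from Corollary \ref{FIRSTCOROLLARY}, which I would invoke verbatim. That corollary asserts that for any positive $M \in M_n(\mathcal{A})$,
\begin{align*}
M \circ M \succeq \frac{1}{n}(\text{diag } M)(\text{diag } M)^*,
\end{align*}
so the entire proof reduces to identifying the right-hand side under the hypothesis that every diagonal entry of $M$ equals the unit $1 \in \mathcal{A}$.

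First I would note that under this hypothesis $\text{diag}(M)$ is precisely the all-ones column vector $e_n = (1,1,\ldots,1)^T \in \mathcal{A}^n$ (the same $e_n$ that appears in the proof of Theorem \ref{FIRSTTHEOREM}). Second I would compute the outer product: the $(j,k)$-entry of $e_n e_n^*$ is $1 \cdot 1^* = 1$, so $e_n e_n^* = E_n$ by the definition of $E_n$ given in the introduction. Substituting into the inequality from Corollary \ref{FIRSTCOROLLARY} then gives $M \circ M \succeq \frac{1}{n} E_n$ immediately.

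There is essentially no obstacle here beyond bookkeeping: all the substantive work (the Cauchy--Schwarz estimate in a Hilbert C*-module, the factorisation of positives, and the commutativity assumption on $\mathcal{A}$) has already been absorbed into Theorem \ref{MYTHEOREM} and hence Corollary \ref{FIRSTCOROLLARY}. The only minor point worth flagging is that the identification $e_n e_n^* = E_n$ uses that $1^* = 1$ in a unital C*-algebra and that the C*-algebraic outer product coincides entrywise with ordinary multiplication by $1$. Thus the statement follows in a single line once the diagonal of $M$ is recognised as $e_n$.
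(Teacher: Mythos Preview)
Your proposal is correct and follows exactly the approach the paper indicates: the paper simply states that the result is immediate from Corollary~\ref{FIRSTCOROLLARY}, and you have spelled out precisely that specialisation, identifying $\text{diag}(M)=e_n$ and $e_ne_n^*=E_n$.
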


\section{C*-algebraic Novak's conjecture}

It is well known that the \textbf{exponential map}
\begin{align*}
e:\mathcal{A}\ni x \mapsto e^x \coloneqq \sum_{n=0}^{\infty}\frac{x^n}{n!}\in \mathcal{A}
\end{align*}
is a well defined map on a unital C*-algebra (more is true, it is well-defined on unital Banach algebras). Using this map and from the definition of trigonometric functions (for instance, see Chapter 8  in \cite{RUDINBOOK})  we define C*-algebraic sine and cosine functions as follows.
\begin{definition}
	Let $\mathcal{A}$ be a  unital C*-algebra. Define the \textbf{C*-algebraic sine} function by 
	\begin{align*}
		\sin: \mathcal{A}\ni x \mapsto \sin x \coloneqq \frac{e^{ix}-e^{-ix}}{2i}\in  \mathcal{A}.
	\end{align*}
Define the \textbf{C*-algebraic cosine} function by
\begin{align*}
	\cos: \mathcal{A}\ni x \mapsto \cos x \coloneqq \frac{e^{ix}+e^{-ix}}{2}\in  \mathcal{A}.
\end{align*}
\end{definition}
By a direct computation, we have the following result. The result also shows the similarity and differences of C*-algebraic trigonometric functions with  usual trigonometric functions.
\begin{theorem}\label{TRIGONOMETRICTHEOREM}
Let $\mathcal{A}$ be a 	unital C*-algebra. Then 
	\begin{enumerate}[\upshape (i)]
		\item $\sin(-x)=-\sin x, \forall x \in \mathcal{A}.$
		\item $\cos(-x)=\cos x, \forall x \in \mathcal{A}.$
		\item $\sin(x+y)=\sin x \cos y+ \cos x \sin y, \forall x,y  \in \mathcal{A}$ such that $xy=yx$.
		\item $\cos(x+y)=\cos x \cos y- \sin x \sin y, \forall x,y  \in \mathcal{A}$ such that $xy=yx$.
		\item $(\sin x)^*=\sin x^*, \forall x \in \mathcal{A}.$
		\item $(\cos x)^*=\cos x^*, \forall x \in \mathcal{A}.$
		\item $\sin^2 x+\cos^2 x=1, \forall x \in \mathcal{A}.$
	\end{enumerate}
\end{theorem}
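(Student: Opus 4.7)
The plan is to reduce every identity to manipulations of the exponential series and two standard facts about the Banach-algebra exponential: (a) $e^{a+b}=e^a e^b$ whenever $ab=ba$, which is proved by the usual Cauchy product of absolutely convergent series, and (b) $(e^a)^*=e^{a^*}$, which follows by taking adjoints term-by-term in the defining series and invoking continuity of the involution. Once these are in place, parts (i)--(vii) become short algebraic verifications straight from the definitions of $\sin$ and $\cos$.

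For (i) and (ii) I would simply swap $x$ with $-x$ in the definitions and read off the signs. For (v) and (vi) I would apply (b): since $(ix)^*=-ix^*$ and $(-ix)^*=ix^*$, taking the involution of $\sin x = (e^{ix}-e^{-ix})/(2i)$ and dividing by the conjugate $-2i$ of $2i$ reproduces $\sin(x^*)$, and similarly for cosine.

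Parts (iii) and (iv) are the slightly delicate ones, and the commutativity hypothesis $xy=yx$ is used precisely here. From $xy=yx$ one gets $(\pm ix)(\pm iy)=(\pm iy)(\pm ix)$ for every sign pattern, so fact (a) gives $e^{i(x+y)}=e^{ix}e^{iy}$ and $e^{-i(x+y)}=e^{-ix}e^{-iy}$, and moreover the four exponentials $e^{\pm ix}, e^{\pm iy}$ all commute with one another (each being a norm limit of polynomials in $x$ respectively $y$). With these commutations available, expanding $\sin x\cos y+\cos x\sin y$ and $\cos x\cos y-\sin x\sin y$ as products of the bracket $\tfrac{1}{2i}(e^{ix}-e^{-ix})$, $\tfrac{1}{2}(e^{ix}+e^{-ix})$, etc., the cross terms cancel in exactly the classical pattern and one obtains $\tfrac{1}{2i}(e^{i(x+y)}-e^{-i(x+y)})$ and $\tfrac{1}{2}(e^{i(x+y)}+e^{-i(x+y)})$ respectively. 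For (vii), $x$ commutes with itself, so $e^{ix}e^{-ix}=e^0=1$; squaring the definitions and adding gives $\sin^2 x+\cos^2 x=\tfrac{1}{4}\bigl[-(e^{2ix}-2+e^{-2ix})+(e^{2ix}+2+e^{-2ix})\bigr]=1$.

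The only real obstacle is keeping the commutativity bookkeeping honest in (iii) and (iv); once one explicitly records that $e^{\pm ix}$ and $e^{\pm iy}$ pairwise commute under the hypothesis $xy=yx$, the rest is the same algebraic expansion as for scalars. No further C*-algebraic machinery beyond (a) and (b) is needed.
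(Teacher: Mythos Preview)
Your proposal is correct and matches the paper's approach: the paper simply asserts that the theorem follows ``by a direct computation,'' and what you have written is precisely that computation spelled out, reducing everything to the Banach-algebra facts $e^{a+b}=e^ae^b$ for commuting $a,b$ and $(e^a)^*=e^{a^*}$. There is nothing more to compare.
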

In the sequel, by $\mathcal{A}_\text{sa}$ we mean the set of all self-adjoint elements in the unital C*-algebra $\mathcal{A}$.
Motivated from Novak's conjecture (Theorem \ref{NOVAKCONJECTURE}), we formulate the following conjecture. 
\begin{conjecture}\label{CALGEBRAICNOVAKNON}(\textbf{C*-algebraic Novak's conjecture})
Let $\mathcal{A}$ be a  unital C*-algebra. Then the matrix 
\begin{align*}
\begin{bmatrix}\prod_{l=1}^{d}
\frac{1+\cos (x_{j,l}-x_{k,l})}{2}-\frac{1}{n}
\end{bmatrix}_{1\leq j,k \leq n}
\end{align*}	
is positive for all $n,d\geq 2$ and all choices of $x_j=(x_{j,1}, \dots, x_{j,d})\in \mathcal{A}_\text{sa}^d$,  $\forall 1\leq j \leq n$.	
\end{conjecture}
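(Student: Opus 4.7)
The plan is to lift Vyb\'{\i}ral's proof of the scalar Novak conjecture to the C*-algebraic framework of Section \ref{SECTION3}, and to isolate precisely where non-commutativity breaks the argument. For each index $l \in \{1,\dots,d\}$ and each $j \in \{1,\dots,n\}$, set $c_{j,l}:=\cos(x_{j,l}/2)$ and $s_{j,l}:=\sin(x_{j,l}/2)$, and consider the vectors $v_j^{(l)}:=(c_{j,l},s_{j,l})^{\text{T}}\in \mathcal{A}^2$. Form the Gram matrix
\begin{align*}
G^{(l)}:=\bigl[\langle v_j^{(l)},v_k^{(l)}\rangle\bigr]_{1\leq j,k\leq n}=\bigl[c_{j,l}c_{k,l}+s_{j,l}s_{k,l}\bigr]_{1\leq j,k\leq n}\in M_n(\mathcal{A}).
\end{align*}
This is positive in $M_n(\mathcal{A})$ for every choice of self-adjoint $x_{j,l}$, since the Hilbert C*-module identity $\sum_{j,k}a_j\langle v_j^{(l)},v_k^{(l)}\rangle a_k^{*}=\langle \sum_j a_j v_j^{(l)},\sum_k a_k v_k^{(l)}\rangle\geq 0$ needs no commutativity hypothesis on the $x_{j,l}$. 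Theorem \ref{TRIGONOMETRICTHEOREM}(vii) gives $G^{(l)}_{j,j}=c_{j,l}^{2}+s_{j,l}^{2}=1$, and when the pair $x_{j,l},x_{k,l}$ commutes the addition formula Theorem \ref{TRIGONOMETRICTHEOREM}(iv) produces $G^{(l)}_{j,k}=\cos((x_{j,l}-x_{k,l})/2)$, so that $(G^{(l)}_{j,k})^{2}=(1+\cos(x_{j,l}-x_{k,l}))/2$.

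Next I would form the iterated C*-algebraic Schur product $P:=G^{(1)}\circ G^{(2)}\circ \cdots \circ G^{(d)}\in M_n(\mathcal{A})$ and verify that $P$ is positive with $P_{j,j}=1$ for all $j$. Granting a Schur product theorem on $\mathcal{A}$ — furnished by Theorem \ref{COMMUTATIVESCHURPRODUCT} or Theorem \ref{NONCOMMUTATIVESCHURPRODUCT} under their hypotheses, and otherwise the open Question \ref{SCHURPRODUCTPROBLEM} — Corollary \ref{SECONDCOROLLARY} (extended to the same class) then gives $P\circ P\succeq (1/n)E_n$. Identifying the entries of $P\circ P$ with those of the matrix of Conjecture \ref{CALGEBRAICNOVAKNON} reduces, factor by factor, to the identity $(G^{(l)}_{j,k})^{2}=(1+\cos(x_{j,l}-x_{k,l}))/2$ together with the multiplicativity of the Schur square across the $d$ factors. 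In the commutative case both ingredients are immediate and this recovers Theorem \ref{CALGEBRAICNOVAKCONJECTURE}.

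The hard part is that in a genuinely non-commutative C*-algebra the half-angle formula behind Theorem \ref{TRIGONOMETRICTHEOREM}(iv) fails: when $x_{j,l}x_{k,l}\neq x_{k,l}x_{j,l}$ one has $c_{j,l}c_{k,l}+s_{j,l}s_{k,l}\neq \cos((x_{j,l}-x_{k,l})/2)$, and $(1+\cos(x_{j,l}-x_{k,l}))/2$ need not admit any Gram factorisation of the above type. My strategy to attack this obstruction is twofold. First, pass to the generated C*-subalgebra $\mathcal{C}:=C^{*}(x_{j,l}:1\leq j\leq n,\,1\leq l\leq d)\subseteq \mathcal{A}$ and, in the class covered by Theorem \ref{SPECTRALTHEOREM} ($\sigma$-finite W*-algebras and AW*-algebras), use a direct-integral / maximal-abelian-subalgebra decomposition of $\mathcal{C}$ to reduce fibrewise to the already-established commutative case Theorem \ref{CALGEBRAICNOVAKCONJECTURE}. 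Second, in the absence of simultaneous diagonalisation, search for a symmetrised or normal-ordered interpretation of the ordered product $\prod_{l=1}^{d}(1+\cos(x_{j,l}-x_{k,l}))/2$ — which is generically not even self-adjoint when the factors with different $l$ fail to commute — that still satisfies a non-commutative analogue of the half-angle identity. I expect this last ingredient to be the genuine obstacle: without a new C*-algebraic substitute for $\cos^{2}(\theta/2)=(1+\cos\theta)/2$ valid for non-commuting self-adjoint elements, the conjecture is likely to remain open for arbitrary C*-algebras.
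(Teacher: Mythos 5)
The statement you were asked to prove is labelled a \emph{conjecture} in the paper, and the paper itself offers no proof of it: it only establishes the commutative special case (Theorem \ref{CALGEBRAICNOVAKCONJECTURE}). Read as a proof of Conjecture \ref{CALGEBRAICNOVAKNON} for an arbitrary unital C*-algebra, your proposal is therefore incomplete by your own admission: the two strategies you sketch for the non-commutative case (direct-integral reduction to abelian subalgebras, and a symmetrised substitute for the half-angle identity) are not carried out, and the Schur product theorem you would need there is itself the paper's open Question \ref{SCHURPRODUCTPROBLEM}. So there is no complete argument here --- but that is the correct state of affairs, since the paper has none either.

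On the part that can actually be proved, your route coincides with the paper's proof of Theorem \ref{CALGEBRAICNOVAKCONJECTURE}. The paper shows $[\cos(z_j-z_k)]_{j,k}\succeq 0$ by expanding $\cos(z_j-z_k)=\cos z_j\cos z_k+\sin z_j\sin z_k$ and writing $\langle Ay,y\rangle$ as a sum of two terms of the form $ww^*$; your Gram matrix $G^{(l)}=[\langle v_j^{(l)},v_k^{(l)}\rangle]$ with $v_j^{(l)}=(\cos(x_{j,l}/2),\sin(x_{j,l}/2))$ is the same computation in different packaging. Both arguments then invoke Theorem \ref{COMMUTATIVESCHURPRODUCT} to make $M_1\circ\cdots\circ M_d$ positive, Corollary \ref{SECONDCOROLLARY} for the $\tfrac{1}{n}E_n$ lower bound, and the identity $\cos^2(\theta/2)=(1+\cos\theta)/2$ to match the entries. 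Two of your side remarks are worth retaining. First, positivity of $G^{(l)}$ itself needs no commutativity --- the Gram-matrix argument is valid in any Hilbert C*-module --- and it is only the identification $G^{(l)}_{j,k}=\cos((x_{j,l}-x_{k,l})/2)$ that uses $x_{j,l}x_{k,l}=x_{k,l}x_{j,l}$ via Theorem \ref{TRIGONOMETRICTHEOREM}(iv); this is slightly sharper than what the paper records. Second, and more substantively, you observe that for non-commuting data the ordered product $\prod_{l=1}^{d}(1+\cos(x_{j,l}-x_{k,l}))/2$ need not be self-adjoint, so the matrix in Conjecture \ref{CALGEBRAICNOVAKNON} need not even be self-adjoint as written. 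That is a genuine well-posedness issue with the conjecture which the paper does not address, and any attack on the general case would have to begin by fixing the statement, for instance by interpreting the product of the $d$ factors through the symmetrised Schur product of Definition \ref{CALGEBRAICDEF}.
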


We solve a special case of Conjecture \ref{CALGEBRAICNOVAKNON}.
\begin{theorem}\label{CALGEBRAICNOVAKCONJECTURE}(\textbf{Commutative C*-algebraic Novak's conjecture})
Let $\mathcal{A}$ be a commutative unital C*-algebra. Then the matrix 
\begin{align*}
 \begin{bmatrix}\prod_{l=1}^{d}
 \frac{1+\cos (x_{j,l}-x_{k,l})}{2}-\frac{1}{n}
 \end{bmatrix}_{1\leq j,k \leq n}
\end{align*}	
is positive for all $n,d\geq 2$ and all choices of $x_j=(x_{j,1}, \dots, x_{j,d})\in \mathcal{A}_\text{sa}^d$,  $\forall 1\leq j \leq n$.
\end{theorem}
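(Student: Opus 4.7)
The plan is to lift Vyb\'{\i}ral's original proof of Theorem~\ref{NOVAKCONJECTURE} to the commutative C*-algebraic setting, using the results from Sections~\ref{SECTION2} and~\ref{SECTION3}. The central observation is the half-angle identity
\[
\tfrac{1+\cos(x-y)}{2}=\cos^{2}\!\bigl(\tfrac{x-y}{2}\bigr),
\]
which holds in $\mathcal{A}$ because $\cos(2\theta)=\cos^{2}\theta-\sin^{2}\theta=2\cos^{2}\theta-1$ by Theorem~\ref{TRIGONOMETRICTHEOREM}(iv),(vii), commutativity of $\mathcal{A}$ making the hypothesis $xy=yx$ automatic. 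This suggests introducing the auxiliary matrix
\[
N \coloneqq \Bigl[\prod_{l=1}^{d}\cos\!\bigl(\tfrac{x_{j,l}-x_{k,l}}{2}\bigr)\Bigr]_{1\leq j,k\leq n}\in M_n(\mathcal{A}),
\]
so that $N\circ N$ (with the commutative C*-algebraic Schur product of Definition~\ref{CSCHUR}) recovers precisely the matrix $\bigl[\prod_{l}\tfrac{1+\cos(x_{j,l}-x_{k,l})}{2}\bigr]_{j,k}$.

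The next step would be to establish that each single-coordinate factor $P_l\coloneqq\bigl[\cos\!\bigl(\tfrac{x_{j,l}-x_{k,l}}{2}\bigr)\bigr]_{j,k}$ is positive. Expanding via Theorem~\ref{TRIGONOMETRICTHEOREM}(i),(ii),(iv) gives
\[
\cos\!\bigl(\tfrac{x_{j,l}-x_{k,l}}{2}\bigr)=\cos\tfrac{x_{j,l}}{2}\cos\tfrac{x_{k,l}}{2}+\sin\tfrac{x_{j,l}}{2}\sin\tfrac{x_{k,l}}{2},
\]
realizing $P_l$ as $A_l A_l^{*}$ where $A_l\in M_{n,2}(\mathcal{A})$ has rows $\bigl(\cos(x_{j,l}/2),\sin(x_{j,l}/2)\bigr)$; since $x_{j,l}\in\mathcal{A}_{\text{sa}}$, Theorem~\ref{TRIGONOMETRICTHEOREM}(v),(vi) ensures both entries are self-adjoint so that $A_l^*$ is just the conjugate-transpose in the expected form, and positivity is immediate from $\langle A_l A_l^{*}v,v\rangle=\langle A_l^{*}v,A_l^{*}v\rangle\geq 0$ for every $v\in\mathcal{A}^n$. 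Iterating Theorem~\ref{COMMUTATIVESCHURPRODUCT} across $l=1,\dots,d$ then yields positivity of $N=P_1\circ\cdots\circ P_d$. A glance at the diagonal shows $N_{j,j}=\prod_{l=1}^{d}\cos 0=1_{\mathcal{A}}$, so Corollary~\ref{SECONDCOROLLARY} applies directly to $N$ and delivers
\[
N\circ N\;\succeq\;\tfrac{1}{n}E_n,
\]
which, upon inserting the explicit description of $N\circ N$, is exactly the sought positivity.

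I do not expect a genuine obstacle in executing this plan. The two places commutativity is decisive are the angle-subtraction formula (Theorem~\ref{TRIGONOMETRICTHEOREM}(iv) requires $xy=yx$, used for every pair $x_{j,l},x_{k,l}$ and again in $\cos(2\theta)$) and the identification of $N\circ N$ with the Novak matrix (which relies on the commutative collapse of Definition~\ref{CSCHUR} to coordinate-wise ordinary multiplication, so that squaring commutes with the product over $l$). Both are available free of charge here, and both fail for general $\mathcal{A}$, which is fully consistent with the non-commutative Conjecture~\ref{CALGEBRAICNOVAKNON} remaining open.
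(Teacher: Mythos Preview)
Your proposal is correct and follows essentially the same route as the paper: both arguments establish positivity of each single-coordinate matrix $P_l=[\cos((x_{j,l}-x_{k,l})/2)]$ via the angle-subtraction formula (you phrase this as $P_l=A_lA_l^{*}$, the paper computes $\langle P_l y,y\rangle$ directly, which is the same calculation), take the Schur product over $l$ using Theorem~\ref{COMMUTATIVESCHURPRODUCT}, and then invoke Corollary~\ref{SECONDCOROLLARY} together with the half-angle identity. The only cosmetic difference is the order of exposition and your explicit remark about where commutativity enters.
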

\begin{proof}
 We first show that the matrix 
\begin{align*}
A\coloneqq \begin{bmatrix}
\cos (z_{j}-z_{k})\end{bmatrix}_{1\leq j,k \leq n}
\end{align*}	
is positive for all $n,d\geq 2$ and all choices of $z_1,\dots , z_n\in \mathcal{A}_{\text{sa}}$. First note that  Theorem \ref{TRIGONOMETRICTHEOREM}  says that the matrix $A$ is self adjoint. An important theorem used by Vyb\'{\i}ral in his proof of Novak's conjecture is the Bochner theorem \cite{REEDSIMON}. Since  \textbf{Bochner theorem} for C*-algebras is probably not known, we use Theorem \ref{TRIGONOMETRICTHEOREM} and  make a direct computation which is inspired from computation done in  	\cite{STEWART}.  Let  $y=(y_1,\dots, y_n)\in \mathcal{A}_{\text{sa}}^d$. Then 
\begin{align*}
\langle Ay, y\rangle &=\sum_{j=1}^{n}\sum_{k=1}^{n}(\cos (z_j-z_k))y_jy_k^*\\
&=\sum_{j=1}^{n}\sum_{k=1}^{n}(\cos z_j \cos z_k+ \sin z_j \sin z_k)y_jy_k^*\\
&=\left(\sum_{j=1}^{n} (\cos z_j)y_j\right)\left(\sum_{j=1}^{n} (\cos z_j)y_j\right)^*+\left(\sum_{j=1}^{n} (\sin z_j)y_j\right)\left(\sum_{j=1}^{n} (\sin z_j)y_j\right)^*\\
&\geq 0.
\end{align*}
We define $ n$ by $n $ matrices $M_1,\dots, M_d$ as follows. 
\begin{align*}
M_l \coloneqq \begin{bmatrix}\cos \left(\frac{x_{j,l}-x_{k,l}}{2}\right)\end{bmatrix}_{1\leq j,k \leq n}, \quad \forall 1\leq l \leq d.
\end{align*}
Theorem \ref{COMMUTATIVESCHURPRODUCT} then says that the matrix 
\begin{align*}
M\coloneqq M_1\circ \cdots \circ  M_d=\begin{bmatrix}\prod_{l=1}^{d}\cos\left(\frac{x_{j,l}-x_{k,l}}{2}\right) \end{bmatrix}_{1\leq j,k \leq n} 
\end{align*}
is positive. Since all diagonal entries of $M$ are one's, we can apply Corollary \ref{SECONDCOROLLARY} to get 

\begin{align*}
\begin{bmatrix} \prod_{l=1}^{d}\frac{1+\cos (x_{j,l}-x_{k,l})}{2}\end{bmatrix}_{1\leq j,k \leq n}=\begin{bmatrix} \prod_{l=1}^{d}\cos^2 \left(\frac{x_{j,l}-x_{k,l}}{2}\right)\end{bmatrix}_{1\leq j,k \leq n}=M\circ M \succeq \frac{1}{n}E_n,
\end{align*}
i.e., 
\begin{align*}
\begin{bmatrix}\prod_{l=1}^{d}\frac{1+\cos (x_{j,l}-x_{k,l})}{2}-\frac{1}{n}\end{bmatrix}_{1\leq j,k \leq n}\succeq 0.
\end{align*}
\end{proof}
We end the paper by asking an open problem  similar to question asked by Vyb\'{\i}ral in arXiv version  (see https://arxiv.org/abs/1909.11726v1) of the paper \cite{VYBIRAL}.
\begin{question}\label{LASTQUESTION}
\textbf {Can the bound in Theorem \ref{MYTHEOREM}  be improved for the C*-algebraic Schur product of positive matrices over (commutative) unital C*-algebras?}
\end{question}
\textbf{Final sentense:}
  Improved version  of Theorem   \ref{VIBIRALSECOND}  is given by  Dr. Apoorva Khare (see Theorem A in \cite{APOORVAPROC})  but it seems that the arguments used in the proof of Theorem A in \cite{APOORVAPROC} do not work for C*-algebras. 
  \section{Acknowledgements}
 I thank Dr. P. Sam Johnson, Department of Mathematical and Computational Sciences, National Institute of Technology Karnataka (NITK),  Surathkal for his help and some discussions.
 \bibliographystyle{plain}
 \bibliography{reference.bib}

\end{document}